\newcommand{\R}{{\mathbb{R}}}
\newcommand{\IN}{{\mathbb{N}}}
\numberwithin{equation}{section}
\newtheorem{remark}{Remark}[section]
\title{Linear convergence of inexact descent method and inexact proximal gradient algorithms for lower-order regularization problems}
\author{Yaohua Hu\thanks{College of Mathematics and Statistics, Shenzhen University, Shenzhen 518060, P. R. China (mayhhu@szu.edu.cn). This author's work was supported in part by the National Natural Science Foundation of China (11601343) and Natural Science Foundation of Guangdong (2016A030310038).}
\and Chong Li\thanks{School of Mathematical Sciences, Zhejiang University, Hangzhou 310027, P. R. China (cli@zju.edu.cn).
This author's work was supported in part by the National Natural Science Foundation of China (11571308).}
\and Kaiwen Meng\thanks{School of Economics and Management, Southwest Jiaotong University, Chengdu 610031, P. R. China (mkwfly@126.com).
This author's work was supported in part by the National Natural Science Foundation of China (11671329).}
\and Xiaoqi Yang\thanks{Department of Applied Mathematics, The Hong Kong Polytechnic University, Kowloon, Hong Kong (mayangxq@polyu.edu.hk). This author's work was supported in part by the Research Grants Council of Hong Kong (PolyU 152167/15E).}
}
\begin{document}
\maketitle

\begin{abstract}
The $\ell_p$ regularization problem with $0< p< 1$ has been widely studied for finding sparse solutions of linear inverse problems and gained successful applications in various mathematics and applied science fields. The proximal gradient algorithm is one of the most popular algorithms for solving the $\ell_p$ regularisation problem. In the present paper, we investigate the linear convergence issue of one inexact descent method and two inexact proximal gradient algorithms (PGA). For this purpose, an optimality condition theorem is explored to provide the equivalences among a local minimum, second-order optimality condition and second-order growth property of the $\ell_p$ regularization problem. By virtue of the second-order optimality condition and second-order growth property, we establish the linear convergence properties of the inexact descent method and inexact PGAs under some simple assumptions. Both linear convergence to a local minimal value and linear convergence to a local minimum are provided. Finally, the linear convergence results of the inexact numerical methods are extended to the infinite-dimensional Hilbert spaces.
\end{abstract}

\begin{keywords}
sparse optimization, nonconvex regularization, descent methods, proximal gradient algorithms, linear convergence.
\end{keywords}

\begin{AMS} Primary, 65K05, 65J22; Secondary, 90C26, 49M37 \end{AMS}

\section{Introduction}
The following linear inverse problem is at the core of many problems in various areas of mathematics and applied sciences:
finding $x\in \R^n$ such that
\[
Ax=b,
\]
where $A\in \R^{m\times n}$ and $b\in \R^m$ are known, and an unknown noise is included in $b$. If $m\ll n$, the above linear inverse problem is seriously ill-conditioned and has infinitely many solutions, and researchers are interested in finding solutions with certain structures, e.g., the sparsity structure. A popular technique for approaching a sparse solution of the linear inverse problem is to solve the $\ell_1$ regularization problem
\[
\min_{x\in \R^n}\; \|Ax-b\|^2+\lambda\|x\|_1,
\]
where $\|\cdot\|$ denotes the Euclidean norm, $\|x\|_1:=\sum_{i=1}^n |x_i|$ is a sparsity promoting norm, and $\lambda>0$ is a regularization parameter providing a tradeoff between accuracy and sparsity. In the past decade, the $\ell_1$ regularization problem has been extensively investigated (see, e.g., \cite{BeckTeboulle09,Combettes05,Daubechies04,Nesterov13,TZhang13,YZ11}) and gained successful applications in a wide range of fields, such as compressive sensing \cite{CandesTao05,Donoho06}, image science \cite{BeckTeboulle09,Elad-SRR}, systems biology \cite{SimonSGLasso2013,HuIP17} and machine learning \cite{BachETAL2012,Mairal2015}.

However, in recent years, it has been revealed by extensive empirical studies that the solutions obtained from the $\ell_1$ regularization may be much less sparse than the true sparse solution, and that the $\ell_1$ regularization cannot recover a signal or an image with the least measurements when applied to compressive sensing; see, e.g., \cite{Chartrand08,XuZB12,Zhang10}.
To overcome these drawbacks, the following $\ell_p$ regularization problem ($0< p< 1$) was introduced in \cite{Chartrand08,XuZB12} to improve the performance of sparsity recovery:
\begin{equation}\label{eq-lp}
\min_{x\in \R^n}\; \|Ax-b\|^2+\lambda\|x\|_p^p,
\end{equation}
where $\|x\|_p:=\left(\sum_{i=1}^n |x_i|^p\right)^{1/p}$ is the $\ell_p$ quasi-norm.
It was shown in \cite{Chartrand08} that the $\ell_p$ regularization requires a weaker restricted isometry property to guarantee perfect sparsity recovery and allows to obtain a more sparse solution from fewer linear measurements than that required by the $\ell_1$ regularization; and it was illustrated in \cite{HuJMLR17,XuZB12} that the $\ell_p$ regularization has a significantly stronger capability in obtaining a sparse solution than the $\ell_1$ regularization. 
Benefitting from these advantages, the $\ell_p$ regularization technique has been applied in many fields; see \cite{HuJMLR17,Marjanovic2014,Pant2014,QinHu2014} and references therein. It is worth noting that the $\ell_{p}$ regularization problem \eqref{eq-lp} is a variant of lower-order penalty problems, investigated in \cite{BurachikRubinov07,HuangYang03,LuoPangRalph96}, for a constrained optimization problem. The main advantage of the lower-order penalty functions over the classical $\ell_1$ penalty function in the context of constrained optimization is that they require weaker conditions to guarantee an exact penalization property and that their least exact penalty parameter is smaller.

Motivated by these significant advantages and successful applications of the $\ell_p$ regularization,
tremendous efforts have been devoted to
the study of optimization algorithms for the $\ell_p$ regularization problem. Many practical algorithms have been investigated for solving problem \eqref{eq-lp}, such as an interior-point potential reduction algorithm \cite{Ge10}, smoothing methods \cite{ChenMP2012,ChenXJ10}, splitting methods \cite{PTKMP15,TKSIOPT15} and iterative reweighted minimization methods \cite{LaiWang11,LuMP2014}. In particular, Xu et al. \cite{XuZB12} proposed an iterative half thresholding algorithm, which is efficient in signal recovery and image deconvolution.
In the present paper, we are particularly interested in the proximal gradient algorithm (in short, PGA) for solving problem \eqref{eq-lp}, which is reduced to the algorithm proposed in \cite{XuZB12} when $p=\frac12$.

{\scshape Algorithm PGA}.\,
Given an initial point $x^0\in \R^n$ and a sequence of stepsizes $\{v_k\}\subseteq \R_+$. For each $k\in \IN$, having $x^k$, we determine $x^{k+1}$ as follows:
\begin{eqnarray}
z^k&:=&x^k-2v_k A^\top  (Ax^k-b), \nonumber \\
x^{k+1}&\in&{\rm arg}\min_{x\in \R^n}\left\{\lambda \|x\|_p^p+\frac{1}{2v_k}\|x-z^k\|^2\right\}.\label{eq-PPA}
\end{eqnarray}
The PGA is one of the most widely studied first-order iterative algorithms for solving regularization problems, and a special case of several iterative methods (see \cite{AttouchBolte10,AttouchBolte13,BolteTeboulle13,TsengPaul09,RazaviyaynHong13}) for solving the composite minimization problem 
\begin{equation}\label{eq-CCO}
\min_{x\in \R^n}\; F(x):=H(x)+\Phi(x),
\end{equation}
where $H:\R^n\to \overline{\R}:=\R\cup \{+\infty\}$ is smooth and convex, and $\Phi:\R^n\to \overline{\R}$ is nonsmooth and possibly nonconvex.
The convergence properties of these iterative methods have been explored under the framework of so-call Kurdyka-{\L}ojasiewicz (in short, KL) theory. In particular, Attouch et al. \cite{AttouchBolte13} established the global convergence of abstract descent methods for minimizing a KL function $F:\R^n\to \overline{\R}$ (see \cite[Definition 2.4]{AttouchBolte13} for the definition of a KL function), in which the sequence $\{x_k\}$ satisfies the following hypotheses for two positive constants $\alpha$ and $\beta$:
\begin{enumerate}
  \item[(H1)] (\emph{Sufficient decrease condition}). For each $k\in \IN$,
  \begin{equation*}\label{asp-H1}
  F(x^{k+1})-F(x^k)\le -\alpha\|x^{k+1}-x^k\|^2;
  \end{equation*}
  \item[(H2)] (\emph{Relative error condition}). For each $k\in \IN$, there exists $w^{k+1} \in \partial F(x^{k+1})$ such that
  \begin{equation*}\label{asp-H2}
  \|w^{k+1}\| \le \beta\|x^{k+1}-x^k\|;
  \end{equation*}
  \item[(H3)] (\emph{Continuity condition})\footnote{This condition is satisfied automatically for the $\ell_p$ regularization problem \eqref{eq-lp}.}. There exist a subsequence $\{x^{k_j}\}$ and a point $x^*$ such that
  \[
  \lim_{j\to \infty}x^{k_j} \to x^*\quad \mbox{and} \quad \lim_{j\to \infty}F(x^{k_j}) \to F(x^*).
  \]
\end{enumerate}
The global convergence of Algorithm PGA follows from the established convergence results of \cite{AttouchBolte13}.


The study of convergence rates of optimization algorithms is an important issue of numerical optimization, and much attention has been paid to establish the convergence rates of relevant iterative algorithms for solving the structured optimization problem \eqref{eq-CCO}; see \cite{AttouchBolte10,Bolte2016,HuSIOPT16,PTKMP15,OchsiPiano2014,Tseng2010,TsengPaul09,WenBoTK2017,Yin13} and references therein.
For example, the linear convergence of the PGA for solving the classical $\ell_1$ (convex) regularization problem has been well investigated; see, e.g., \cite{BrediesLorenz08a,SZZhang2016,ZhangLuoJOSC2013,HuOpt17} and references therein.
Under the general framework of the KL (possibly nonconvex) functions, the linear convergence of several iterative algorithms for solving problem \eqref{eq-CCO}, including the PGA as a special case, have been established in \cite{AttouchBolte10,BolteTeboulle13,TsengPaul09,Yin13} under the assumption that the KL exponent of the objective function is $\frac12$. However, the KL exponent of the $\ell_q$ regularized function is still unknown, and thus, the linear convergence result in these references cannot be directly applied to the $\ell_q$ regularization problem \eqref{eq-lp}.
On the other hand, Zeng et al. \cite{zhenlinxu2015} obtained the linear convergence of the PGA 
for problem \eqref{eq-lp} with an upper bound on $p$, which may be less than $1$, and a lower bound on the stepsizes $\{v_k\}$, and
Hu et al. \cite{HuJMLR17} established the linear convergence of the PGA for the group-wised $\ell_p$ regularization problem under the assumption that the limiting point is a local minimum.

Another important issue is the practicability of the PGA for solving the $\ell_p$ regularization problem \eqref{eq-lp}. It is worth noting that the main computation of the PGA is the calculation of the proximity operator of the $\ell_p$ regularizer \eqref{eq-PPA}. The analytical solutions of the proximity operator of the $\ell_p$ regularizer \eqref{eq-PPA} when $p=1$ (resp. $\frac23$, $\frac12$, 0)  were provided in \cite{Daubechies04} (resp. \cite{XuZB23}, \cite{XuZB12}, \cite{Blumensath08}); see also \cite[Proposition 18]{HuJMLR17} for the group-wised $\ell_p$ regularizer. However, in the scenario of general $p$, the proximity operator of the $\ell_p$ regularizer may not have  an analytic solution (see \cite[Remark 21]{HuJMLR17}), and it could be computationally expensive to solve subproblem \eqref{eq-PPA} exactly at each iteration. Although some recent works showed impressive empirical performance of the inexact versions of the PGA that use an approximate proximity operator (see, e.g., \cite{HuJMLR17,Ma2011} and references therein), there is few theoretical analysis, to the best of our knowledge, on how the error in the calculation of the proximity operator affects the convergence rate of the inexact PGA for solving the $\ell_p$ regularization problem \eqref{eq-lp}. Two relevant papers on the linear convergence study of the inexact PGA should be mentioned: (a) Schmidt et al. \cite{SchmidtNIPS2011} proved the linear convergence of the inexact PGA for solving the convex composite problem \eqref{eq-CCO}, in which $H$ is strongly convex and $\Phi$ is convex;
(b) Frankel et al. \cite{Frankel-JOTA} provided a framework of establishing the linear convergence for descent methods satisfying (H1)-(H3), where (H2) is replaced by inexact form (H2$^\circ)$, see section 4. However, the convergence analysis in \cite{Frankel-JOTA} was based on the assumption that the KL exponent of $F$ is $\frac12$ and the inexact version would be not convenient to implement for applications; see the explanation in Remark \ref{rem-IPGA} below. Therefore, neither of the convergence analysis in \cite{Frankel-JOTA,SchmidtNIPS2011} can be applied to establish the linear convergence of the inexact PGA for solving the $\ell_q$ regularization problem. Thus, a clear analysis of the convergence rate of the inexact PGA is required to advance our understanding of its strength for solving the $\ell_p$ regularization problem \eqref{eq-lp}.

The aim of the present paper is to investigate the linear convergence issue of an inexact descent method and inexact PGAs for solving the $\ell_p$ regularization problem \eqref{eq-lp}. For this purpose, we first investigate an optimality condition theorem for the local minima of the $\ell_p$ regularization problem \eqref{eq-lp}, in which we establish the equivalences among a local minimum, second-order optimality condition and second-order growth property of the $\ell_p$ regularization problem \eqref{eq-lp}.
The established optimality conditions are not only of independent interest (which, in particular, improve the result in \cite{ChenXJ10}) in investigating the structure of local minima, but also provide a crucial tool for establishing the linear convergence of the inexact descent method and inexact PGAs for solving the $\ell_p$ regularization problem in sections 4 and 5.

We then consider a general framework of an inexact descent method, in which both (H1) and (H2) are relaxed to inexact forms (see (H1$^\circ$) and (H2$^\circ$) in section 4), for solving the $\ell_p$ regularization problem. Correspondingly, the solution sequence does not satisfy the descent property. This is an essential difference from the extensive studies in descent methods and the work of Frankel et al. \cite{Frankel-JOTA}. Under some mild assumptions on the limiting points and inexact terms, we establish the linear convergence of the inexact descent method by virtue of both second-order optimality condition and second-order growth property (see Theorem \ref{thm-LC}).

The convergence theorem for the inexact descent method further provides a useful tool for establishing the linear convergence of the inexact PGAs in section 5. Our convergence analysis deviates significantly from that of \cite{Frankel-JOTA} and relevant works in descent methods, where the KL inequality is used as a standard technique.
Indeed, we investigate the inexact versions of the PGA for solving the $\ell_p$ regularization problem \eqref{eq-lp}, in which the proximity operator of the $\ell_p$ regularizer \eqref{eq-PPA} is approximately solved at each iteration (with progressively better accuracy). Inspired by the ideas in the seminal work of Rockafellar \cite{roc76}, we consider two types of inexact PGAs: one measures the inexact term by the approximation of proximal regularized function value, and the other is measured by the distance of the iterate to the exact proximal operator (see Algorithms IPGA-I and IPGA-II). Under some suitable assumptions on the inexact terms, we establish the linear convergence of these two inexact PGAs to a local minimum of problem \eqref{eq-lp}; see Theorems \ref{thm-IPGA-I} and \ref{thm-IPGA-II}. It is worth noting that neither of these inexact PGAs satisfies the conditions of the inexact descent method mentioned earlier; see the explanation in Remark \ref{rem-IDM}(ii). In our analysis in this part, Theorem \ref{thm-LC} plays an important role in such a way that we are able to show that the components sequence on the support of the limiting point satisfies the conditions of Theorem \ref{thm-LC}.
We further propose two implementable 
inexact PGAs that satisfy the assumptions made in the convergence theorems and thus share the linear convergence property. 

As an interesting byproduct, the results obtained above are extended to the infinite-dimensional Hilbert spaces. Bredies et al. \cite{Bredies2015} investigated the PGA for solving the $\ell_p$ regularization problem in infinite-dimensional Hilbert spaces and proved its global convergence to a critical point under some technical assumptions and using dedicated tools from algebraic geometry; see the explanation before Theorem \ref{thm-PGA-inf}. Dropping these technical assumptions, we prove the global convergence of the PGA under the only assumption on stepsizes (as in \cite{Bredies2015}), which significantly improves \cite[Theorem 5.1]{Bredies2015}, and, under a simple additional assumption,
further establish the linear convergence of the descent method and PGA, as well as their inexact versions, for solving the $\ell_p$ regularization problem in infinite-dimensional Hilbert spaces.


The paper is organized as follows. In section 2, we present the notations and preliminary results to be used in the present paper. In section 3, we establish the equivalences among a local minimum, second-order optimality condition and second-order growth property of the $\ell_p$ regularization problem \eqref{eq-lp}, as well as some interesting corollaries. By virtue of the second-order optimality condition and second-order growth property, the linear convergence of an inexact descent method and inexact PGAs for solving problem \eqref{eq-lp} are established in sections 4 and 5, respectively. Finally, the convergence properties of relevant algorithms are extended to the infinite-dimensional Hilbert spaces in section 6.

\section{Notation and preliminary results}
We consider the $n$-dimensional Euclidean space $\R^n$ with inner product $\langle \cdot\, ,\cdot \rangle$ and Euclidean norm $\|\cdot\|$.
For $0 < p < 1$ and $x\in \R^n$, the $\ell_p$ ``norm"  on $\R^n$ is denoted by $\|\cdot\|_p$ and defined as follows:
\[
\|x\|_p:=\left(\sum_{i=1}^n |x_i|^p\right)^{\frac1p}\quad \mbox{for each } x\in \R^n;
\]
while  $\|x\|_0$   denotes the number of nonzero components of  $x$.
It is well-known (see, e.g., \cite[Eq. (7)]{HuJMLR17}) that
\begin{equation}\label{eq-norm}
\|x\|_p\ge \|x\|_q\quad \mbox{for each } x\in \R^n \mbox{ and } 0<p\le q.
\end{equation}
%
We write ${\rm supp}:\R^n\to \R$ and ${\rm sign}:\R \to \R$ to denote the support function and signum function, respectively.
For an integer $l\le n$, fixing $x\in \R^l$ and $\delta\in\R_+$,  we use $\mathbf{B}(x,\delta)$ to denote the open ball of radius $\delta$ centered at $x$ (in the Euclidean norm).
Moreover, we write
\[
\R^l_{\neq}:=\{x\in \R^l:x_i\neq 0 \mbox{ for each } i=1,\dots,l\}.
\]
Let $\R^{l\times l}$ denote  the space of all $l\times l$ matrices.  We endow $\R^{l\times l}$ with the partial orders $\succ$ and  $\succeq $, which are  defined for any $Y,\, Z\in \R^{l\times l}$ by
\[
Y \succ (\mbox{resp.}, \succeq) \, Z \quad \Longleftrightarrow  \quad Y-Z \mbox{ is positive definite (resp., positive semi-definite)}.
\]
Thus, for $Z\in\R^{l\times l}$,   $Z\succ 0$ (resp.,  $Z\succeq 0$, $Z\prec 0$) means  that $Z$ is positive definite (resp., positive semi-definite, negative definite). In particular, we use ${\rm diag}(x)$ to denote a square diagonal matrix with the components of vector $x$ on its main diagonal.

For simplicity, associated with problem \eqref{eq-lp}, we use $F:\R^n\to \R$ to denote the $\ell_p$ regularized function, and $H:\R^n\to \R$ and $\Phi:\R^n\to \R$ are the functions defined by
\begin{equation}\label{eq-Flp}
F(\cdot):=H(\cdot)+\Phi(\cdot), \quad H(\cdot):=\|A\cdot-b\|^2 \quad \mbox{and} \quad \Phi(\cdot):=\lambda\|\cdot\|_p^p.
\end{equation}
Letting $x^*\in \R^n\setminus \{0\}$, we write
\begin{equation}\label{eq-sI}
s:=\|x^*\|_0\quad \mbox{and} \quad I:={\rm supp}(x^*),
\end{equation}
We write $A_{i}$ to denote the $i$-th column of $A$, $A_I:=(A_i)_{i\in I}$ and $x_I:=(x_i)_{i\in I}$.
Let $f:\R^s\to \R$, $h:\R^s\to \R$ and $\varphi:\R^s\to \R$ be the functions defined by
\begin{equation}\label{eq-fun-g}
f(\cdot):=h(\cdot)+\varphi(\cdot), \quad h(\cdot):=\|A_I\cdot-b\|^2\quad \mbox{and} \quad \varphi(\cdot):=\lambda\|\cdot\|_p^p
\end{equation}
Obviously, $\varphi$ is smooth (of arbitrary order) on $ \R^s_{\neq}$, and so is $f$. The first- and second-order derivatives of $\varphi$ at each $y\in \R^s_{\neq}$ are respectively given by
\begin{equation}\label{eq-dg}
\nabla \varphi(y)=\lambda p\left(\left(|y_i|^{p-1}{\rm sign}(y_i)\right)_{i\in I}\right) \quad \mbox{and}\quad \nabla^2 \varphi(y)=\lambda p(p-1){\rm diag}\left(\left(|y_i|^{p-2}\right)_{i\in I}\right).
\end{equation}
Since $0<p<1$, it is clear that $\nabla^2\varphi(y)\prec 0$ for any $y\in \R_{\neq}^s$.
By \eqref{eq-Flp} and \eqref{eq-fun-g}, one sees that
\begin{equation}\label{eq-g-Fp}
\Phi(x)=\varphi(x_I) \mbox{ and } F(x)=f(x_I) \mbox{ for each $x$ satisfying ${\rm supp}(x)=I$}.
\end{equation}
The point $x^*$ is called a critical point of problem \eqref{eq-lp} if it satisfies that $\nabla f(x^*_I)=0$.
The following elementary equality is repeatedly used in our convergence analysis:
\begin{equation}\label{eq-Axb}
 \|Ay-b\|^2-\|Ax-b\|^2=\langle y-x, 2A^\top (Ax-b)\rangle +\|A(y-x)\|^2
\end{equation}
(by Taylor's formula applied to the function $\|A\cdot-b\|^2$).
We end this section by providing the following lemma, which is useful to establish the linear convergence of inexact decent methods.
\begin{lemma}\label{lem-LC-seq}
Let $\eta\in (0,1)$, and let $\{a_k\}$ and $\{\delta_k\}$ be two sequences of nonnegative scalars such that
\begin{equation}\label{eq-lem-seq}
a_{k+1}\le a_k \eta +\delta_k \mbox{ for each } k\in \IN \quad \mbox{and} \quad \limsup_{k\to \infty} \frac{\delta_{k+1}}{\delta_k}<1.
\end{equation}
Then there exist $\theta\in (0,1)$ and $K>0$ such that
\begin{equation}\label{eq-lem-seq-a}
a_k\le K \theta^k \quad \mbox{for each } k\in \IN.
\end{equation}
\end{lemma}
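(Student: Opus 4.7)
The plan is to reduce everything to geometric bounds: first show that $\delta_k$ itself decays geometrically, then unroll the one-step recursion and bound the convolution of two geometric sequences.

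First I would exploit the hypothesis $\limsup_{k\to\infty}\delta_{k+1}/\delta_k<1$ (interpreting the ratio on indices where $\delta_k>0$; if $\delta_k=0$ eventually the conclusion is immediate from iterating the recursion). Pick $\rho\in(0,1)$ with $\rho>\limsup_{k}\delta_{k+1}/\delta_k$. Then there exists $k_0$ such that $\delta_{k+1}\le\rho\,\delta_k$ for every $k\ge k_0$, and iterating gives $\delta_k\le C\rho^{k}$ for some $C>0$ (absorbing $\delta_{k_0}/\rho^{k_0}$). I would also enlarge $\rho$, if necessary, so that $\rho\ne\eta$; this avoids the degenerate convolution case below without loss of generality, since any $\rho'\in(\rho,1)$ still satisfies $\delta_k\le C\rho'^{k}$.

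Next I would iterate the one-step bound $a_{k+1}\le\eta a_k+\delta_k$ to obtain
\begin{equation*}
a_k\le\eta^k a_0+\sum_{j=0}^{k-1}\eta^{k-1-j}\delta_j\le\eta^k a_0+C\sum_{j=0}^{k-1}\eta^{k-1-j}\rho^{j}.
\end{equation*}
The remaining geometric sum evaluates to $(\rho^k-\eta^k)/(\rho-\eta)$, which is bounded in absolute value by $\max(\eta,\rho)^k/|\rho-\eta|$. Setting $\theta:=\max(\eta,\rho)\in(0,1)$, this yields
\begin{equation*}
a_k\le\theta^k\Bigl(a_0+\tfrac{C}{|\rho-\eta|}\Bigr),
\end{equation*}
so \eqref{eq-lem-seq-a} holds with $K:=a_0+C/|\rho-\eta|$.

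There is no real obstacle beyond the minor bookkeeping of ensuring $\rho\ne\eta$ (handled by taking $\rho$ slightly larger if they coincide) and handling the possibility that some $\delta_k$ vanish (handled either by starting the geometric tail bound after the last zero of $\delta_k$, or by observing that if $\delta_k\equiv 0$ from some point on, the conclusion follows directly from $a_{k+1}\le\eta a_k$). Thus the whole argument is essentially the discrete Gr\"onwall/convolution estimate for two geometric sequences.
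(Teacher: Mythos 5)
Your argument is correct, and it resolves the recursion by a different device than the paper does. Both proofs start the same way, extracting from $\limsup_k \delta_{k+1}/\delta_k<1$ a ratio $\rho$ (the paper uses $\tau^2$) and an index beyond which $\delta_{k+1}\le\rho\,\delta_k$; but where you then bound $\delta_k\le C\rho^k$ outright, unroll $a_{k+1}\le\eta a_k+\delta_k$, and evaluate the resulting convolution of two geometric sequences in closed form as $(\rho^k-\eta^k)/(\rho-\eta)\le \max(\eta,\rho)^k/|\rho-\eta|$, the paper instead rewrites the hypothesis as $a_{k+1}\le a_k\theta+c_k\theta^k$ with $\theta:=\max\{\eta,\tau\}$ and a \emph{summable} sequence $\{c_k\}$, proves by induction the product bound $a_k\le \max\{1,a_1/(c_0+\theta)\}\prod_{i=0}^{k-1}(c_i+\theta)$, and then uses $\ln(1+t)\le t$ to convert the product into $K\theta^k$. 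Your route is more elementary and gives an explicit constant with no induction, at the small price of the $\rho\ne\eta$ bookkeeping (which the paper's choice $\theta=\max\{\eta,\tau\}$ avoids by design, since equal bases never enter a convolution there); the paper's route is slightly more robust in that it only needs $\delta_k\le c_k\theta^k$ with $\sum_k c_k<+\infty$, a weaker input than genuine geometric decay of $\delta_k$, though under the stated limsup hypothesis this extra generality is not required. One cosmetic remark: when you absorb constants to get $\delta_k\le C\rho^k$ for \emph{all} $k$, the constant must also dominate the finitely many ratios $\delta_k/\rho^k$ with $k<k_0$, not only $\delta_{k_0}/\rho^{k_0}$; this is trivial but worth stating, and your treatment of vanishing $\delta_k$ is at least as careful as the paper's, which passes over that point silently.
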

\begin{proof}
We first claim that there exist $\theta\in (0,1)$ and a sequence of nonnegative scalars $\{c_k\}$ such that
\begin{equation}\label{eq-lem-seq-ab}
a_{k+1}\le a_k \theta +c_k \theta^k \mbox{ for each } k\in \IN \quad \mbox{and} \quad \sum_{k=0}^\infty c_k<+\infty.
\end{equation}
Indeed, by the second inequality of \eqref{eq-lem-seq}, there exist $\tau\in (0,1)$ and $N\in \IN$ such that $\delta_{k+1}\le \tau^2 \delta_k$ for each $k\ge N$.
Letting $c_i:=\tau^{i-2N} \delta_N$ when $i\ge N$ and $c_i:=\frac{\delta_i}{\tau^i}$ otherwise, this shows that
\begin{equation}\label{eq-lem-seq-ad}
\delta_k\le c_k \tau^k  \quad \mbox{for each } k\in \IN.
\end{equation}
Consequently, we check that
$
\sum_{k=0}^\infty c_k=\sum_{k=0}^{N-1} c_k + \frac{\tau^{-N}}{1-\tau}\delta_N<+\infty.
$
Letting $\theta:=\max\{\eta,\tau\}$ and combining \eqref{eq-lem-seq} and \eqref{eq-lem-seq-ad}, we arrive at \eqref{eq-lem-seq-ab}, as desired.

Next, we show by mathematical induction that the following relation holds for each $k\in \IN$:
\begin{equation}\label{eq-lem-seq-1}
a_k\le \max\left\{1, \frac{a_1}{c_0+\theta}\right\}\prod_{i=0}^{k-1} (c_i+\theta).
\end{equation}
Clearly, \eqref{eq-lem-seq-1} holds for $k = 1$. Assuming that \eqref{eq-lem-seq-1} holds for each $k\le N$, we estimate $a_{N+1}$ in the following two cases.\\
Case 1. If $a_N<\theta^N$, it follows from the first inequality of \eqref{eq-lem-seq-ab} that
\[
a_{N+1}\le (\theta+c_N)\theta^N\le \prod_{i=0}^{N} (c_i+\theta)\le \max\left\{1, \frac{a_1}{c_0+\theta}\right\}\prod_{i=0}^{N} (c_i+\theta).
\]
Case 2. If $a_N\ge \theta^N$, one sees by \eqref{eq-lem-seq-ab} and \eqref{eq-lem-seq-1} (when $k=N$) that
\[
a_{N+1}\le (\theta+c_N)a_N\le \max\left\{1, \frac{a_1}{c_0+\theta}\right\}\prod_{i=0}^{N} (c_i+\theta).
\]
Hence, for both cases, \eqref{eq-lem-seq-1} holds for $k=N+1$, and so, it holds for each $k\in \IN$ by mathematical induction. Clearly, \eqref{eq-lem-seq-1} can be reformulated as
\begin{equation}\label{eq-lem-seq-2}
a_k\le\max\left\{1, \frac{a_1}{c_0+\theta}\right\}\theta^k\exp\left(\sum_{i=0}^{k-1} \ln\left(1+\frac{c_i}{\theta}\right) \right).
\end{equation}
Note that $\ln(1+t) \le t$ for any $t \ge 0$. It follows that
\[
\sum_{i=0}^{k-1} \ln(1+\frac{c_i}{\theta})\le \frac1{\theta}\sum_{i=0}^{k-1} c_i\le \frac1{\theta}\sum_{i=0}^{\infty} c_i<+\infty
\]
(by \eqref{eq-lem-seq-ab}). Letting $K:=\max\left\{1, \frac{a_1}{c_0+\theta}\right\}\exp\left(\frac1{\theta}\sum_{i=0}^{\infty} c_i\right)$, we conclude \eqref{eq-lem-seq-a} by \eqref{eq-lem-seq-2}, and the proof is complete.
\end{proof}

\section{Characterizations of local minima}
Optimality condition is a crucial tool for optimization problems, either providing the useful characterizations of (local) minima or designing effective optimization algorithms.
Some sufficient or necessary optimality conditions for the $\ell_p$ regularization problem \eqref{eq-lp} have been developed in the literature; see \cite{ChenXJ10,HuJMLR17,ZLu2013,Nikolova13} and references therein.
In particular, Chen et al. \cite{ChenXJ10} established the following first- and second-order necessary optimality conditions for a local minimum $x^*$ of problem \eqref{eq-lp}, i.e.,
\begin{equation}\label{eq-1st}
2A_I^{\top}(A_Ix^*_I-b)+\lambda p\left(\left(|x^*_i|^{p-1}{\rm sign}(x^*_i)\right)_{i\in I}\right)=0,
\end{equation}
and
\begin{equation}\label{eq-2nd-nec}
2A_I^{\top}A_I+\lambda p(p-1){\rm diag}\left(\left(|x^*_i|^{p-2}\right)_{i\in I}\right)\succeq 0,
\end{equation}
where $I={\rm supp}(x^*)$ is defined by \eqref{eq-sI}.
These necessary conditions were used to estimate the (lower/upper) bounds for the absolute values and  the number of nonzero components of local minima.
However, it seems that a complete optimality condition that is both necessary and sufficient for the local minima of the $\ell_p$ regularization problem has not been established yet in the literature. To remedy this gap, this section is devoted to providing some necessary and sufficient characterizations for the local minima of problem \eqref{eq-lp}.


To begin with, the following lemma (i.e., \cite[Lemma 10]{HuJMLR17}) illustrates that the $\ell_p$ regularized function satisfies a first-order growth property at $0$, 
which is useful for proving the equivalent characterizations of its local minima.
This property also indicates a significant advantage of the $\ell_p$ regularization over the $\ell_1$ regularization that the $\ell_p$ regularization has a strong sparsity promoting capability.

\begin{lemma}\label{lem-FOG}
Let $h:\R^n\rightarrow \R$ be a continuously differentiable function.
Then there exist $\epsilon>0$ and $\delta>0$ such that
\[
h(x)+\lambda \|x\|_p^p\ge h(0)+\epsilon\|x\|\quad \mbox{for any } x\in \mathbf{B}(0,\delta).
\]
\end{lemma}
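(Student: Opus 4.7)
The plan is to bound $h(x)-h(0)$ from below by a term linear in $\|x\|$ using the smoothness of $h$, and then show that the regularizer $\lambda\|x\|_p^p$, while behaving as $\|x\|^p$ near $0$, dominates \emph{any} linear term in a sufficiently small ball because $p<1$ makes $\|x\|^{p-1}$ blow up as $\|x\|\to 0$. An $\epsilon$ of slack then remains.

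First, since $h$ is $C^1$, the mean value theorem gives, for each $x$ in a small ball, some $\xi$ on the segment $[0,x]$ with $h(x)-h(0)=\langle\nabla h(\xi),x\rangle$. By continuity of $\nabla h$ at $0$, I would pick $\delta_1>0$ so that $\|\nabla h(\xi)\|\le \|\nabla h(0)\|+1=:C$ whenever $\|\xi\|\le \delta_1$, which yields
\[
h(x)\ge h(0)-C\|x\|\quad\text{for all } x\in \mathbf{B}(0,\delta_1).
\]

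Next I would exploit the norm comparison \eqref{eq-norm}: since $0<p<1\le 2$, we have $\|x\|_p\ge \|x\|_2=\|x\|$, and hence $\lambda\|x\|_p^p\ge \lambda\|x\|^p$. For $\|x\|\le \delta$ with $\delta\le 1$, using $p-1<0$, I can estimate $\|x\|^{p-1}\ge \delta^{p-1}$, which gives
\[
\lambda\|x\|_p^p\ge \lambda\|x\|^p=\lambda\|x\|^{p-1}\|x\|\ge \lambda\delta^{p-1}\|x\|.
\]
Because $\delta^{p-1}\to+\infty$ as $\delta\to 0^+$, I can pick $\delta_2\in(0,1]$ small enough that $\lambda\delta_2^{p-1}\ge C+1$.

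Setting $\delta:=\min\{\delta_1,\delta_2\}$ and $\epsilon:=1$, the two bounds combine to yield, for every $x\in\mathbf{B}(0,\delta)$,
\[
h(x)+\lambda\|x\|_p^p\ge h(0)-C\|x\|+(C+\epsilon)\|x\|=h(0)+\epsilon\|x\|,
\]
as required. There is no real obstacle here; the only point worth emphasizing is the mechanism $\|x\|^{p-1}\to\infty$ as $\|x\|\to 0$, which is precisely what distinguishes the $\ell_p$ regularizer from the $\ell_1$ case (where a nonzero $\nabla h(0)$ may swamp the regularization term) and which underpins the first-order growth — and ultimately the strong sparsity-promoting — behavior used throughout the subsequent sections.
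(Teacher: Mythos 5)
Your proof is correct: the mean value theorem bound $h(x)\ge h(0)-C\|x\|$ on a small ball, combined with $\lambda\|x\|_p^p\ge\lambda\|x\|^p\ge\lambda\delta^{p-1}\|x\|$ via \eqref{eq-norm} and the blow-up of $\|x\|^{p-1}$ as $\|x\|\to 0$, yields the first-order growth with $\epsilon=1$ (the case $x=0$ being trivial). Note that the paper does not prove this lemma itself but quotes it from \cite[Lemma 10]{HuJMLR17}; your argument is a correct, self-contained proof in exactly the spirit of that result, so nothing further is needed.
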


The main result of this section is presented in the following theorem, in which we establish the equivalences among a local minimum, second-order optimality condition and second-order growth property of the $\ell_p$ regularization problem \eqref{eq-lp}. Note that the latter two conditions were provided in \cite{HuJMLR17} as necessary conditions for the group-wised $\ell_p$ regularization problem, while the second-order optimality condition is an improvement of the result in \cite{ChenXJ10} in that the matrix in the left-hand side of \eqref{eq-2nd-nec} is indeed positive definite.
Recall that $F:\R^n\to \R$ is the $\ell_p$ regularized function defined by \eqref{eq-Flp}  and $I={\rm supp}(x^*)$ is defined by \eqref{eq-sI}.


\begin{theorem}\label{thm-SOG}
Let $x^*\in \R^n\setminus \{0\}$.
Then the following assertions are equivalent:
\begin{enumerate}[{\rm (i)}]
\item $x^*$ is a local minimum of problem \eqref{eq-lp}.
\item \eqref{eq-1st} and the following condition hold:
\begin{equation}\label{eq-2nd}
2A_I^{\top}A_I+\lambda p(p-1){\rm diag}\left(\left(|x^*_i|^{p-2}\right)_{i\in I}\right)\succ 0.
\end{equation}
\item Problem \eqref{eq-lp} satisfies the second-order growth property at $x^*$, 
i.e., there exist $\epsilon>0$ and $\delta>0$ such that
\begin{equation}\label{eq-sgc}
F(x)\geq F(x^*)+\epsilon\|x-x^*\|^2\quad \mbox{for any } x\in \mathbf{B}(x^*,\delta).
\end{equation}
\end{enumerate}
\end{theorem}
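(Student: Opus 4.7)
The plan is to establish the three-way equivalence via the cycle (i)$\Rightarrow$(ii)$\Rightarrow$(iii)$\Rightarrow$(i). The key observation throughout is that, for $x$ whose $I$-components are all nonzero and whose $I^c$-components vanish, the identity $F(x)=f(x_I)$ in \eqref{eq-g-Fp} holds and $f$ is smooth at $x_I$; this reduces analysis on the support to a smooth problem. The implication (iii)$\Rightarrow$(i) is immediate, since \eqref{eq-sgc} forces $F(x)>F(x^*)$ for every $x\in \mathbf{B}(x^*,\delta)\setminus\{x^*\}$.

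For (i)$\Rightarrow$(ii), I would first argue that $x^*_I\in\R^s_{\neq}$ is a local minimum of the smooth function $f$: for $y$ near $x^*_I$ in $\R^s_{\neq}$, the extension $(y,0_{I^c})$ lies near $x^*$ and $F((y,0_{I^c}))=f(y)$, so $f(y)\geq f(x^*_I)$. Classical smooth optimality theory then yields $\nabla f(x^*_I)=0$, i.e.\ \eqref{eq-1st}, together with $\nabla^2 f(x^*_I)\succeq 0$. The delicate step, which I expect to be the principal obstacle, is upgrading this semidefiniteness to strict positive definiteness \eqref{eq-2nd}. I plan to argue by contradiction: if some nonzero $v$ satisfies $\nabla^2 f(x^*_I)v=0$, I would examine the symmetric combination $f(x^*_I+tv)+f(x^*_I-tv)-2f(x^*_I)$ via fourth-order Taylor expansion at $x^*_I$. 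The odd-order contributions cancel by symmetry, the quadratic term vanishes by the choice of $v$, and since $h$ is quadratic its cubic and quartic derivatives vanish; thus the leading contribution is $\tfrac{t^4}{12}\nabla^4\varphi(x^*_I)[v,v,v,v]$. A direct separable computation gives
\[
\nabla^4\varphi(x^*_I)[v,v,v,v]=\lambda p(p-1)(p-2)(p-3)\sum_{i\in I}|x^*_i|^{p-4}v_i^4,
\]
which is strictly negative on $(0,1)$ because $p(p-1)(p-2)(p-3)<0$ (sign pattern $(+)(-)(-)(-)$) and at least one $v_i\neq 0$. This forces $\min\{f(x^*_I+tv),\,f(x^*_I-tv)\}<f(x^*_I)$ for small $t>0$, contradicting local minimality of $x^*_I$.

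For (ii)$\Rightarrow$(iii), I propose to decompose, for $x$ in a small ball around $x^*$,
\[
F(x)-F(x^*)=\bigl[f(x_I)-f(x^*_I)\bigr]+\bigl[\|Ax-b\|^2-\|A_Ix_I-b\|^2+\lambda\|x_{I^c}\|_p^p\bigr].
\]
The first bracket is bounded below by $c_1\|x_I-x^*_I\|^2$ via $\nabla f(x^*_I)=0$, continuity of $\nabla^2 f$ on $\R^s_{\neq}$, and \eqref{eq-2nd}. For the second bracket, I would apply Lemma~\ref{lem-FOG} to the smooth function $H^*(z):=\|A_Ix^*_I+A_{I^c}z-b\|^2$ to obtain constants $\epsilon^*,\delta^*>0$ with $H^*(z)+\lambda\|z\|_p^p\geq H^*(0)+\epsilon^*\|z\|$ for $\|z\|\leq\delta^*$. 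Using \eqref{eq-Axb} twice, the second bracket re-expresses as $[H^*(x_{I^c})+\lambda\|x_{I^c}\|_p^p-H^*(0)]+2\langle A_I(x_I-x^*_I),\,A_{I^c}x_{I^c}\rangle$; the residual cross term is $O(\|x_I-x^*_I\|\,\|x_{I^c}\|)$ and, after shrinking the neighborhood of $x^*_I$, absorbs into $\tfrac{\epsilon^*}{2}\|x_{I^c}\|$. Combining these bounds yields $F(x)-F(x^*)\geq c_1\|x_I-x^*_I\|^2+\tfrac{\epsilon^*}{2}\|x_{I^c}\|$. A final shrink of the ball so that $\|x_{I^c}\|\leq\delta'$ converts the linear-in-$\|x_{I^c}\|$ lower bound into a quadratic one, producing \eqref{eq-sgc}.

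The main obstacle is the strict positive definiteness step in (i)$\Rightarrow$(ii): classical smooth necessary conditions only deliver semidefiniteness, and upgrading requires the fourth-order Taylor argument above, which crucially exploits the separability of $\varphi$ (so that $\nabla^4\varphi$ is diagonal) and the sign pattern $p(p-1)(p-2)(p-3)<0$ that holds throughout $(0,1)$. Everything else reduces to standard smooth quadratic growth combined with the first-order growth of the $\ell_p$ regularizer recorded in Lemma~\ref{lem-FOG}.
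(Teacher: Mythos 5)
Your proof is correct and follows essentially the same route as the paper: reduce to the smooth restricted function $f$ on the support, upgrade $\nabla^2 f(x^*_I)\succeq 0$ to $\succ 0$ via a fourth-order argument exploiting $\lambda p(p-1)(p-2)(p-3)\sum_{i\in I}|x^*_i|^{p-4}v_i^4<0$, and obtain (ii)$\Rightarrow$(iii) by combining quadratic growth of $f$ on the support with Lemma \ref{lem-FOG} applied to a smooth function of $x_{I^c}$, handling the cross term $2\langle A_I(x_I-x^*_I),A_{I^c}x_{I^c}\rangle$. The only cosmetic differences are that the paper analyzes the one-variable function $\psi(t)=f(x^*_I+tw)$ through $\psi^{(3)}(0)=0$ and $\psi^{(4)}(0)\ge 0$ instead of your symmetric combination, and it folds a $-2\tau\|z\|^2$ term into the auxiliary function before invoking Lemma \ref{lem-FOG} and uses AM--GM on the cross term, rather than your absorption of the cross term into the linear growth followed by converting linear to quadratic on a bounded ball.
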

\begin{proof}
Without loss of generality, we assume that $I=\{1,\dots,s\}$.

(i) $\Rightarrow$ (ii). Suppose that (i) holds. Then $x^*_I$ is a local minimum of $f$ (by \eqref{eq-g-Fp}), and \eqref{eq-1st} and \eqref{eq-2nd-nec} hold by \cite[pp. 76]{ChenXJ10} (they can also be checked directly by the optimality condition for  smooth optimization in \cite[Proposition 1.1.1]{Bertsekas99}):
$\nabla f(x_I^*)=0$ and $\nabla^2 f(x_I^*)\succeq 0$.
Thus, it remains to prove \eqref{eq-2nd}, i.e., $\nabla^2 f(x_I^*)\succ 0$.
  To do this, suppose on the contrary that \eqref{eq-2nd} does not hold.
Then, by  \eqref{eq-2nd-nec}, there exists $w\neq 0$ such that $\langle w, \nabla^2 f(x^*_I) w\rangle=0$.
Let $\psi:\R\to \R$ be defined by 
\[\psi(t):=f(x^*_I+tw)\quad \mbox{for each } t\in \R.\]
Then one sees that $\psi'(0)=\langle w, \nabla f(x^*_I)\rangle=0$ and $\psi''(0)=\langle w, \nabla^2 f(x^*_I)w\rangle=0$, and
 $0$ is a local minimum of $\psi$ (as $x^*_I$ is a local minimum of $f$). Therefore, $\psi^{(3)}(0)=0$ and $\psi^{(4)}(0)\ge 0$.
However, by the elementary   calculus, one can check  that
\begin{equation*}\label{eq-4th-nes}
\psi^{(4)}(0)=\lambda p (p-1)(p-2)(p-3) \sum_{i\in I} \left(w_i^4|x_i^*|^{p-4}\right)<0,
\end{equation*}
which yields a contradiction. Hence, assertion (ii) holds.

(ii) $\Rightarrow$ (iii). Suppose that assertion (ii) of this theorem holds. Then
    \begin{equation}\label{1-2suf-c}
 \nabla f(x_I^*)=0\quad\mbox{and}\quad \nabla^2 f(x_I^*)\succ 0.
\end{equation}
By Taylor's formula, we have that
$$f(y)= f(x_I^*)+ \nabla f(x_I^*)(y-x_I^*)+ \frac12 \langle y-x_I^*, \nabla^2 f(x_I^*)(y-x_I^*)\rangle +o(\|y-x_I^*\|^2)\quad \mbox{for each } y\in \R^s.
$$
This, together with \eqref{1-2suf-c},  implies   that there exist $\epsilon_1>0$ and $\delta_1>0$ such that
\begin{equation}\label{eq-sog-0a}
f(y)\geq f(x_I^*)+2\epsilon_1\|y-x_I^*\|^2\quad \mbox{for any } y\in \mathbf{B}(x_I^*,\delta_1).
\end{equation}
Let $\tau>0$ be such that $\sqrt{\epsilon_1\tau}\ge \|A_I\|\|A_{I^c}\|$,
and define $g:\R^{n-s}\to \R$   by
\begin{equation}\label{eq-fun-h}
g(z):=\|A_{I^c}z\|^2+2\langle A_Ix^*_I-b,  A_{I^c}z \rangle-2\tau\|z\|^2\quad \mbox{for each } z\in \R^{n-s}.
\end{equation}
Clearly, $g$ is continuously differentiable on $\R^{n-s}$ with  $g(0)=0$. Then, by Lemma \ref{lem-FOG}, there exist $\epsilon_2>0$ and $\delta_2>0$ such that
\begin{equation}\label{eq-sog-0e}
g(z)+\lambda\|z\|_p^p\ge g(0)+\epsilon_2\|z\|=\epsilon_2\|z\|\geq 0\quad \mbox{for any } z\in \mathbf{B}(0,\delta_2).
\end{equation}
Fix $x:=
\left(\begin{matrix}
   x_I\\
   x_{I^c}\\
\end{matrix}\right)$
with $x_I\in \mathbf{B}(x^*_I,\delta_1)$ and $x_{I^c}\in \mathbf{B}(0,\delta_2)$. Then it follows from the definitions of the functions $F$, $f$ and $g$
(see \eqref{eq-Flp}, \eqref{eq-fun-g} and \eqref{eq-fun-h})  that
\begin{equation*}
\begin{array}{ll}
F(x)&=\|A_Ix_I+A_{I^c}x_{I^c}-b\|^2+\lambda\|x_I\|_p^p+\lambda\|x_{I^c}\|_p^p\\
&=\|A_Ix_I-b\|^2+\|A_{I^c}x_{I^c}\|^2+2\langle A_Ix_I-b,  A_{I^c}x_{I^c}\rangle+\lambda\|x_I\|_p^p+\lambda\|x_{I^c}\|_p^p\\
&=f(x_I)+g(x_{I^c})+2\tau \|x_{I^c}\|^2+\lambda\|x_{I^c}\|_p^p+2\langle A_I(x_I-x^*_I),  A_{I^c}x_{I^c}\rangle.
\end{array}
\end{equation*}
Applying  \eqref{eq-sog-0a}   (to  $x_I$ in place of $y$) and \eqref{eq-sog-0e} (to $x_{I^c}$ in place of $z$), we have that
\begin{equation*}\label{eq-sog-0c}
\begin{array}{ll}
F(x)\ge f(x^*_I)+2\epsilon_1\|x_I-x^*_I\|^2+2\tau\|x_{I^c}\|^2+2\langle A_I(x_I-x^*_I),  A_{I^c}x_{I^c}\rangle.
\end{array}
\end{equation*}
By the definition of $\tau$, we have that
\[
 2|\langle A_I(x_I-x^*_I),  A_{I^c}x_{I^c}\rangle|\le 
2\sqrt{\epsilon_1\tau}\|x_I-x^*_I\|\|x_{I^c}\| \le \epsilon_1\|x_I-x^*_I\|^2+\tau\|x_{I^c}\|^2,
\]
and then, it follows that
\[
F(x) \ge f(x^*_I)+\epsilon_1\|x_I-x^*_I\|^2+\tau\|x_{I^c}\|^2
 \ge f(x^*_I)+\min\{\epsilon_1,\tau\}\|x-x^*\|^2
\]
(noting that $x_{I^c}=x_{I^c}-x^*_{I^c}$).
Hence $F(x)\ge  F(x^*)+\min\{\epsilon_1,\tau\}\|x-x^*\|^2,$
as $f(x^*_I)=F(x^*)$ by \eqref{eq-g-Fp}.  This means that \eqref{eq-sgc} holds with
  $\epsilon:=\min\{\epsilon_1,\tau\}$ and $\delta:=\min\{\delta_1,\delta_2\}$, and so (iii) is verified.

(iii) $\Rightarrow$ (i).  It is trivial.  The proof is complete.
\end{proof}

\begin{remark}
As shown in Lemma \ref{lem-FOG},  for the case when $x^*=0$, the equivalence between assertions (i) and (iii) in Theorem \ref{thm-SOG} is true, while assertion (ii) is not well defined (as $I=\emptyset$).
\end{remark}


The structure of local minima is a useful property for the numerical study of the $\ell_p$ regularization problem; see, e.g., \cite{ChenXJ10,XuZB12}.
As a byproduct of Theorem \ref{thm-SOG}, we will prove that the number of local minima of problem \eqref{eq-lp} is finite, which was claimed in  \cite[Corollary 2.2]{ChenXJ10} but with an incomplete proof (because their proof is based on the fact that $f$ has at most one local minimum whenever $A_I^\top A_I$ is of full rank, which is unclear).

\begin{corollary}\label{coro-FM}
 The $\ell_p$ regularization problem \eqref{eq-lp} has only a finite number of local minima.
\end{corollary}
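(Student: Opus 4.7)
The plan is to reduce the claim, via Theorem \ref{thm-SOG}, to bounding the number of second-order critical points of $f$ on each orthant of $\R^s_{\neq}$, and then to exploit a matrix-convexity property to show that each orthant contributes at most one, giving a bound of $3^n$ on the total number of local minima.

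The first step is to apply Theorem \ref{thm-SOG}: for each nonempty support $I\subseteq\{1,\dots,n\}$ with $|I|=s$, the nonzero local minima of \eqref{eq-lp} with support exactly $I$ correspond bijectively (via $x^*\mapsto x^*_I$) to the points $y^*\in\R^s_{\neq}$ satisfying $\nabla f(y^*)=0$ and $\nabla^2 f(y^*)\succ 0$. Since the origin contributes at most one local minimum and there are only finitely many supports, it then suffices to show, for each fixed $I$, that this set of $y^*$'s is finite. To do this I would partition $\R^s_{\neq}$ into its $2^s$ open orthants $\mathcal{O}$ and consider, for each,
\[
U_\mathcal{O}:=\{y\in\mathcal{O}:\nabla^2 f(y)\succ 0\},
\]
so that the candidate local minima with support $I$ and sign pattern $\mathcal{O}$ are exactly the critical points of $f$ in $U_\mathcal{O}$.

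The heart of the argument is the convexity claim that each $U_\mathcal{O}$ is convex and that $f$ is strictly convex on $U_\mathcal{O}$. By \eqref{eq-dg}, $\nabla^2 f(y)=2A_I^\top A_I-\lambda p(1-p)\,\mathrm{diag}((|y_i|^{p-2})_{i\in I})$. Restricting to the positive orthant (the other cases are symmetric), the scalar map $t\mapsto t^{p-2}$ is convex on $(0,\infty)$ because $(p-2)(p-3)>0$; hence $y\mapsto\mathrm{diag}((y_i^{p-2})_{i\in I})$ is matrix-convex in the L\"owner order, and $y\mapsto\nabla^2 f(y)$ is correspondingly matrix-concave. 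This makes the set $\{y\in\R^s_{>0}:\nabla^2 f(y)\succ 0\}$ convex, and strict convexity of $f$ on this open convex set follows from $\nabla^2 f\succ 0$ there.

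Given the convexity claim, the conclusion is immediate: a strictly convex function on a convex set has at most one critical point (any two would both be global minima on $U_\mathcal{O}$, and their midpoint---still in $U_\mathcal{O}$ by convexity---would strictly decrease the value, a contradiction). Hence there is at most one local minimum of \eqref{eq-lp} per pair $(I,\mathcal{O})$, and summing over all nonempty supports and sign patterns plus the origin bounds the total number of local minima by $1+\sum_{\emptyset\neq I\subseteq\{1,\dots,n\}}2^{|I|}=3^n$. The only step I expect to require care is the matrix-convexity argument, but it reduces to the one-line observation that $t\mapsto t^{p-2}$ is convex on $(0,\infty)$.
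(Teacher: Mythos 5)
Your proposal is correct and follows essentially the same route as the paper's proof: reduce via Theorem \ref{thm-SOG} to second-order points of $f$ for each support, split $\R^s_{\neq}$ by sign patterns, use the convexity of $t\mapsto t^{p-2}$ on $(0,\infty)$ to show the set where $\nabla^2 f\succ 0$ is convex within each orthant, and conclude by strict convexity that each such set carries at most one candidate. The only differences (arguing with critical points rather than local minima on each orthant piece, and recording the explicit bound $3^n$) are cosmetic.
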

\begin{proof}
Let  $I\subseteq \{1,\dots,n\}$. We use  ${\rm LM}(F,\R^n;I)$ to denote the set of local minima $x^*$ of problem \eqref{eq-lp} with ${\rm supp}(x^*)=I$, and set
\begin{equation}\label{eq-FM-theta}
\Theta(I):=\left\{x_I:x\in {\rm LM}(F,\R^n;I)\right\}.
\end{equation}
Then the set of local minima of problem \eqref{eq-lp} can be expressed as the union of ${\rm LM}(F,\R^n;I)$ over all subsets  $I\subseteq \{1,\dots,n\}$.
Clearly,   ${\rm LM}(F,\R^n;I)$ and $\Theta(I)$ have the same cardinality. Thus, to complete the proof,
 it suffices to show that $\Theta(I)$ is finite.
To do this,  we may assume that, without loss of generality,  $I=\{1,\dots,s\}$, and write
\begin{equation}\label{eq-FM-O}
O:=\{y\in \R^s_{\neq}: \nabla^2 f(y)\succ 0\},
\end{equation}
where  $f:\R^s\to \R$ is defined by \eqref{eq-fun-g}.
Clearly, $O$ is open in $\R^s$, and  $\Theta(I)\subseteq O$ by Theorem \ref{thm-SOG}. Thus, it follows   from \eqref{eq-FM-theta} that
\begin{equation}\label{eq-FM-1}
\Theta(I)\subseteq {\rm LM}(f,\R^s)\cap O
\end{equation}
(we indeed can show an equality), where, for an open subset $U$ of $\R^s$, ${\rm LM}(f,U)$   stands for the set of local minima of $f$ over $U$.
For simplicity, we set
 \begin{equation*}\label{eq-OJ}
  \R^s_J:=\{y\in \R^s: y_j>0 \mbox{ for }j\in J,\, y_j<0 \mbox{ for } j\in I\setminus J\}
  \end{equation*}
 and  $O_J:=O\cap\R^s_J$  for  any $J\subseteq I$. Then each $O_J$ is open in  $\R^s$ (as so are $O$ and $\R^s_J$). This particularly implies that
 \begin{equation}\label{xjia}
       {\rm LM}(f,\R^s)\cap O_J =    {\rm LM}(f,O_J) \quad\mbox{for each }J\subseteq I.
       \end{equation}
 Moreover, it is clear that $O=\cup_{J\subseteq I} O_J$. Hence
     \begin{equation}\label{eq-OJ-1}
     \Theta(I)\subseteq {\rm LM}(f,\R^s) \cap O =\cup_{J\subseteq I}\, \left( {\rm LM}(f,\R^s)\cap O_J\right)
     =\cup_{J\subseteq I}\,   {\rm LM}(f, O_J)
     \end{equation}
(thanks to   \eqref{eq-FM-1} and \eqref{xjia}).
%
     Below we show  that
\begin{equation}\label{eq-OJ-2}
O_J \mbox{ is  convex for each } J\subseteq I.
\end{equation}
Granting this, one concludes that   each $ {\rm LM}(f, O_J)$ is at most a singleton, because $\nabla^2 f\succ 0$ on $ O_J$ by \eqref{eq-FM-O} and then
$f$ is strictly convex on $O_J$ by the higher-dimensional derivative tests for convexity (see, e.g., \cite[Theorem 2.14]{Roc98});
hence $\Theta(I)$ is finite by \eqref{eq-OJ-1}, completing the  proof.

To show \eqref{eq-OJ-2}, fix  $J\subseteq I$, and let $y, z\in O_J$. Then, by definition, one has that
 \begin{equation}\label{eq-FM-2}
 \nabla^2 f(y)\succ 0 \quad \mbox{and} \quad \nabla^2 f(z)\succ 0.
 \end{equation}
%
%
%
By   elementary calculus, the map $t\mapsto t^{p-2}$ is convex on $(0, +\infty)$,
and so
\begin{equation*}\label{dddr}
 \frac{|y_i|^{p-2}+|z_i|^{p-2}}2\ge \left(\frac  {|y_i|+|z_i|}2\right)^{p-2}\quad\mbox{for each }i\in I.
\end{equation*}
 Consequently, we have
\[
{\rm diag}\left(\left(\frac{|y_i|^{p-2}+|z_i|^{p-2}}2\right)_{i\in I}\right)\succeq
{\rm diag}\left(\left(\left(\frac  {|y_i|+|z_i|}2\right)^{p-2}\right)_{i\in I}\right).
\]
This, together with \eqref{eq-dg} and \eqref{eq-FM-2}, implies that
\[
\nabla^2 f\left(\frac {y+z}2\right)\succeq \frac{\nabla^2 f(y) +\nabla^2 f(z)}2 \succ 0.
\]
Since $\frac {y+z}2\in  \R^s_J \subseteq  \R^s_{\not=}$, it follows that  $\frac {y+z}2\in O\cap \R^s_J=O_J$ and   \eqref{eq-OJ-2} is proved.
\end{proof}

Another byproduct of Theorem \ref{thm-SOG} is the following corollary, in which we show the isolation of a local minimum of problem \eqref{eq-lp} in the sense of critical points. This property is useful for establishing the global convergence of the inexact descent method and inexact PGA.

\begin{corollary}\label{coro-iso}
Let $x^*$ be a local minimum of the $\ell_p$ regularization problem \eqref{eq-lp}. Then $x^*$ is an isolated critical point of problem \eqref{eq-lp}.
\end{corollary}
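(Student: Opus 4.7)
The natural approach is proof by contradiction: suppose there is a sequence $\{x^k\}\subseteq\R^n$ of critical points of \eqref{eq-lp} with $x^k\to x^*$ and $x^k\neq x^*$ for each $k$. Since $x^*_i\neq 0$ for $i\in I={\rm supp}(x^*)$, convergence forces ${\rm supp}(x^k)\supseteq I$ for all large $k$. Extracting a subsequence, I may assume ${\rm supp}(x^k)\equiv J$ for some fixed $J\supseteq I$; the plan is then to split into the cases $J\supsetneq I$ and $J=I$ and derive a contradiction in each.

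First I would rule out $J\supsetneq I$. For each $i\in J\setminus I$, $x^k_i\neq 0$ while $x^k_i\to 0$, so $|x^k_i|^{p-1}\to +\infty$ because $0<p<1$. The critical point condition for $x^k$ (the analog of \eqref{eq-1st} with $J$ in place of $I$), read coordinate-wise at such $i$, gives
\[
\bigl|\bigl[A_J^\top(A_Jx^k_J-b)\bigr]_i\bigr|=\tfrac{\lambda p}{2}\,|x^k_i|^{p-1}.
\]
The right-hand side blows up, but the left-hand side stays bounded since $x^k\to x^*$ and $A_J$ is fixed. This contradiction forces $J=I$.

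In the remaining case $J=I$, the points $x^k_I$ lie in $\R^s_{\neq}$, satisfy $x^k_I\to x^*_I$ with $x^k_I\neq x^*_I$ (since $x^k_{I^c}=0=x^*_{I^c}$), and fulfill $\nabla f(x^k_I)=0=\nabla f(x^*_I)$ for $f$ as in \eqref{eq-fun-g}. Because $x^*$ is a local minimum, Theorem \ref{thm-SOG} yields $\nabla^2 f(x^*_I)\succ 0$, and continuity of $\nabla^2 f$ on $\R^s_{\neq}$ propagates this positive definiteness to a convex neighborhood $U$ of $x^*_I$. Then $f$ is strictly convex on $U$, so $\nabla f$ vanishes at most once there, forcing $x^k_I=x^*_I$ for large $k$ and contradicting $x^k\neq x^*$.

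I expect the main obstacle to be precisely the case $J\supsetneq I$. The key insight is that the gradient of $\lambda\|\cdot\|_p^p$ blows up whenever any coordinate approaches zero (a direct consequence of $0<p<1$), which confines nearby critical points to have support contained in that of $x^*$. Once this is in hand, isolation is a routine consequence of the strict second-order condition supplied by Theorem \ref{thm-SOG}.
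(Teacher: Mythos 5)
Your proposal is correct and uses the same two ingredients as the paper's proof: the critical-point identity $2|A_i^\top(Ax-b)|=\lambda p|x_i|^{p-1}$, whose right-hand side blows up for small nonzero coordinates and thus pins the support of nearby critical points to $I$, and the positive definiteness $\nabla^2 f(x^*_I)\succ 0$ from Theorem \ref{thm-SOG}, which makes $f$ strictly (in the paper, strongly) convex near $x^*_I$ and hence admits at most one critical point there. The only difference is presentational: you argue by contradiction along a sequence of critical points, whereas the paper works directly on an explicit ball $\mathbf{B}(x^*,\tau)$ with $\tau$ below the threshold $\left(\frac{4}{\lambda p}\|A^\top(Ax^*-b)\|_\infty\right)^{\frac{1}{p-1}}$.
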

\begin{proof}
Recall that $I={\rm supp}(x^*)$ and $f$ are defined by \eqref{eq-sI} and \eqref{eq-fun-g}, respectively.
Since $x^*$ is a local minimum of problem \eqref{eq-lp}, it follows from \eqref{eq-g-Fp} that $x^*_I$ is a local minimum of $f$ and
from Theorem \ref{thm-SOG} (cf. \eqref{eq-2nd}) that $\nabla^2 f(x^*_I)\succ 0$.
By the fact that $x^*_I\in \R_{\neq}^s$ and by the smoothness of $f$ at $x_I^*$, we can find a constant $\tau$ with
\begin{equation}\label{eq-iso-a2}
0< \tau<\left(\frac{4}{\lambda p}\|A^\top(Ax^*-b)\|_\infty\right)^{\frac1{p-1}}
\end{equation}
such that
\begin{equation}\label{eq-iso-a}
\mathbf{B}(x^*_I,\tau)\subseteq \R_{\neq}^s\cap \{y\in \R^s:\nabla^2 f(y)\succ 0\}.
\end{equation}
We aim to show that $\mathbf{B}(x^*,\tau)$ includes only one critical point of problem \eqref{eq-lp}, that is $x^*$. To do this, let $x\in \mathbf{B}(x^*,\tau)$ be a critical point of problem \eqref{eq-lp}. We first claim that ${\rm supp}(x)=I$.
It is clear by \eqref{eq-iso-a} that
\begin{equation}\label{eq-supp-2}
x_i \neq 0 \quad \mbox{when } i\in I, \mbox{ and } |x_i|<\tau \mbox{ otherwise.}
\end{equation}
If $i\in {\rm supp}(x)$,
by the definition of critical point, it follows that $2A_i^{\top}(Ax-b)+\lambda p|x_i|^{p-1}{\rm sign}(x_i)=0$;
consequently, by the fact that $x$ is closed to $x^*$, we obtain that
\[
|x_i|=\left(\frac{2|A_i^{\top}(A x-b)|}{\lambda p}\right)^{\frac1{p-1}}>\left(\frac{4|A_i^{\top}(A x^*-b)|}{\lambda p}\right)^{\frac1{p-1}}> \left(\frac{4\|A^{\top}(A x^*-b)\|_\infty}{\lambda p}\right)^{\frac1{p-1}}
> \tau
\]
(due to \eqref{eq-iso-a2}). This, together with \eqref{eq-supp-2}, shows that ${\rm supp}(x)=I$, as desired.

Finally, we show that $x=x^*$. By \eqref{eq-iso-a}, one has that $f$ is strongly convex on $\mathbf{B}(x^*_I,\tau)$. Since $x$ is a critical point of problem \eqref{eq-lp}, one has by the definition of critical point that $\nabla f(x_I)=0$, and so $x_I$ is a minimum of $f$ on $\mathbf{B}(x^*_I,\tau)$. By the strongly convexity of $f$ on $\mathbf{B}(x^*_I,\tau)$, we obtain $x_I=x^*_I$, and hence that $x=x^*$ (since ${\rm supp}(x)=I$). The proof is complete.
\end{proof}

\section{Linear convergence of inexact descent method}
 This section aims to establish the linear convergence of an inexact version of descent methods in a general framework. In our analysis, we will employ both second-order optimality condition and second-order growth property, established in Theorem \ref{thm-SOG}.

Let $\alpha$ and $\beta$ be fixed positive constants and $\{\epsilon_k\}\subseteq \R_+$ be a sequence of nonnegative scalars, and recall that $F:\R^n\to \R$ is the $\ell_p$ regularized function defined by \eqref{eq-Flp}. We consider a sequence $\{x^k\}$ that satisfies the following relaxed conditions of (H1) and (H2).
\begin{enumerate}
  \item[(H1$^{\circ}$)] For each $k\in \IN$,
  \begin{equation}\label{asp-H1a}
  F(x^{k+1})-F(x^k)\le -\alpha\|x^{k+1}-x^k\|^2+\epsilon_k^2;
  \end{equation}
  \item[{(H2$^{\circ}$)}] For each $k\in \IN$, there exists $w^{k+1} \in \partial F(x^{k+1})$ such that
  \begin{equation*}\label{asp-H2a}
  \|w^{k+1}\| \le \beta\|x^{k+1}-x^k\|+\epsilon_k.
  \end{equation*}
\end{enumerate}
Frankel et al. \cite{Frankel-JOTA} proposed an inexact version of descent methods, in which only (H2) is relaxed to the inexact form (H2$^{\circ}$) while the exact form (H1) is maintained; consequently, the sequence $\{x^k\}$ satisfies a descent property. However, in our framework, note by \eqref{asp-H1a} that the sequence $\{x^k\}$ does not satisfy a descent property. This is an essential difference from \cite{Frankel-JOTA} and extensive studies in descent methods.

We begin with the following useful properties of the inexact descent method;
in particular, a consistent property that $x^k$ has the same support as $x^*$ when $k$ is large (assertion (ii)) is useful for providing a uniform decomposition of $\{x^k\}$ in convergence analysis.

\begin{proposition}\label{lem-supp}
{\rm (i)} Let $\{x^k\}$ be a sequence satisfying ${\rm (H1^{\circ})}$ with
\begin{equation}\label{eq-epi}
\sum_{k=0}^\infty \epsilon_k^2<+\infty.
\end{equation}
Then $\sum_{k=0}^\infty \|x^{k+1}-x^k\|^2< +\infty$.

{\rm (ii)} Let $\{x^k\}$ be a sequence satisfying ${\rm (H2^{\circ})}$ with $\lim_{k\to \infty} \epsilon_k=0$. Suppose that $\{x^k\}$ converges to $x^*$.
Then there exists $N\in \IN$ such that
\begin{equation}\label{eq-lem-supp}
{\rm supp}(x^k)={\rm supp}(x^*) \quad \mbox{for each } k\ge N.
\end{equation}
\end{proposition}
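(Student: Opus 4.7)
\textbf{Proof plan for Proposition~\ref{lem-supp}.} For assertion (i), the route is a standard telescoping argument. Summing \eqref{asp-H1a} for $k=0,\dots,N-1$ yields
\[
\alpha\sum_{k=0}^{N-1}\|x^{k+1}-x^k\|^2 \;\le\; F(x^0)-F(x^N)+\sum_{k=0}^{N-1}\epsilon_k^2.
\]
Because $F(x)=\|Ax-b\|^2+\lambda\|x\|_p^p\ge 0$, the term $F(x^N)$ can be dropped, and \eqref{eq-epi} controls the $\epsilon_k^2$-sum uniformly in $N$. Passing to the limit $N\to\infty$ finishes the proof.

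For assertion (ii), I first observe that the hypotheses force $\|w^{k+1}\|\to 0$: indeed, the convergence $x^k\to x^*$ gives $\|x^{k+1}-x^k\|\to 0$, and combined with $\epsilon_k\to 0$, the bound in (H2$^\circ$) drives the chosen subgradients to zero. One inclusion, ${\rm supp}(x^*)\subseteq{\rm supp}(x^k)$ for large $k$, is then immediate by continuity: whenever $x_i^*\neq 0$, the sequence $x_i^k$ stays bounded away from zero eventually.

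The delicate part, and what I expect to be the main obstacle, is the reverse inclusion ${\rm supp}(x^k)\subseteq{\rm supp}(x^*)$. I would argue by contradiction: suppose some index $i\notin{\rm supp}(x^*)$ satisfies $x_i^{k_j}\neq 0$ along a subsequence, so that $x_i^{k_j}\to x_i^*=0$. Because $x_i^{k_j}\neq 0$, the $i$-th coordinate of \emph{any} element of $\partial F(x^{k_j})$ is forced to equal the classical partial derivative there, namely (see \eqref{eq-dg})
\[
w_i^{k_j} \;=\; 2A_i^{\top}(Ax^{k_j}-b)+\lambda p\,|x_i^{k_j}|^{p-1}{\rm sign}(x_i^{k_j}).
\]
The first summand stays bounded as $x^{k_j}\to x^*$, while $|x_i^{k_j}|^{p-1}\to+\infty$ since $p-1<0$; hence $|w_i^{k_j}|\to\infty$, contradicting $\|w^{k_j}\|\to 0$. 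This singular blow-up of the $\ell_p$ subgradient at zero is precisely the mechanism that forces eventual equality of supports, yielding $N\in\IN$ such that \eqref{eq-lem-supp} holds.
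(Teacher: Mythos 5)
Your proposal is correct and follows essentially the same route as the paper: part (i) is the same telescoping argument using $F\ge 0$, and part (ii) rests on the same mechanism, namely that at a nonzero component $x_i^k$ the subgradient component equals $2A_i^{\top}(Ax^k-b)+\lambda p|x_i^k|^{p-1}{\rm sign}(x_i^k)$, whose singular term $\lambda p|x_i^k|^{p-1}$ is incompatible with the bound from (H2$^{\circ}$) when $x_i^k\to 0$. The only (cosmetic) difference is that you phrase this as a subsequence/contradiction blow-up argument, whereas the paper makes it quantitative by exhibiting an explicit threshold $\gamma$ below which nonzero components cannot fall.
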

\begin{proof}
Assertion (i) of this theorem is trivial by the assumption and the fact that $F\ge 0$.
Below, we prove assertion (ii).
Write
\begin{equation}\label{eq-supp-4a}
\gamma:=\left(\frac{\lambda p}{\beta+1+4\|A^\top(Ax^*-b)\|_\infty}\right)^{\frac1{1-p}}.
\end{equation}
By the assumption that $\{x^k\}$ converges to $x^*$, there exists $N\in \mathbb{N}$ such that for each $k\ge N$
\begin{equation}\label{eq-supp-4b}
x^k_i \neq 0 \quad \mbox{when } i\in {\rm supp}(x^*), \mbox{ and } |x^k_i|<\gamma \mbox{ otherwise.}
\end{equation}
Fix $k \ge N$ and $i\in {\rm supp}(x^k)$.
By the assumption (H2$^\circ$), there exists $w^k\in \partial F(x^k)$ such that
\begin{equation}\label{eq-supp-4c}
\|w^k\|\le \beta\|x^k-x^{k-1}\|+\epsilon_k<\beta+1
\end{equation}
(by the assumptions that $\lim_{k\to \infty} \epsilon_k=0$ and $\lim_{k\to \infty} x^k=x^*$).
Noting that $i\in {\rm supp}(x^k)$, we obtain by \eqref{eq-dg} that
\begin{equation*}\label{eq-supp-4d}
|w_i^k| =|2A_i^{\top}(Ax^k-b)+\lambda p|x_i^k|^{p-1}{\rm sign}(x_i^k)| \ge \lambda p|x_i^k|^{p-1}-4\|A^\top(Ax^*-b)\|_\infty.
\end{equation*}
This, together with \eqref{eq-supp-4c} and \eqref{eq-supp-4a}, shows that $|x_i^k| > \gamma$ when $i\in {\rm supp}(x^k)$.
This, together with \eqref{eq-supp-4b}, shows that ${\rm supp}(x^k)={\rm supp}(x^*)$ for each $k\ge N$. The proof is complete.
\end{proof}

The main theorem of this section is as follows.
The convergence theorem is not only of independent interest in establishing the linear convergence of inexact descent method, but also provides a useful approach for the linear convergence study of the inexact PGA in the next section. Recall that functions $F$ and $f$ are defined by \eqref{eq-Flp} and \eqref{eq-fun-g}, respectively.

\begin{theorem}\label{thm-LC}
Let $\{x^k\}$ be a sequence satisfying ${\rm (H1^{\circ})}$ and $\{\epsilon^k\}$ satisfy \eqref{eq-epi}.
Suppose one of limiting points of $\{x^k\}$, denoted by $x^*$, is a local minimum of problem \eqref{eq-lp}. Then the following assertions are true.
\begin{enumerate}[\rm (i)]
  \item $\{x^k\}$ converges to $x^*$.
  \item Suppose further that $\{x^k\}$ satisfies ${\rm (H2^{\circ})}$ and
  \begin{equation}\label{eq-epsilon}
    \limsup_{k\to \infty} \frac{\epsilon_{k+1}}{\epsilon_k}<1.
    \end{equation}
    Then $\{x^k\}$ converges linearly to $x^*$, that is, there exist $C>0$ and $\eta\in (0,1)$ such that
\begin{equation}\label{eq-thm-LC}
F(x^{k})-F(x^*)\le C\eta^k\quad {\rm and} \quad \|x^k-x^*\|\le C\eta^k\quad \mbox{for each } k\in \IN.
\end{equation}
\end{enumerate}
\end{theorem}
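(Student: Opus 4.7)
For part (i), the first step is to extract the usual ``Lyapunov'' information from (H1$^\circ$): Proposition \ref{lem-supp}(i) gives $\sum_k \|x^{k+1} - x^k\|^2 < \infty$, hence $\|x^{k+1} - x^k\| \to 0$. I would then consider the shifted value $a_k := F(x^k) - F(x^*) + \sum_{j \ge k} \epsilon_j^2$. By (H1$^\circ$) the sequence $\{a_k\}$ is non-increasing, and it is bounded below since $F \ge 0$. Along the assumed subsequence $x^{k_j} \to x^*$, continuity of $F$ together with tail-summability of $\sum \epsilon_j^2$ gives $a_{k_j} \to 0$; monotonicity then forces the whole sequence $a_k \to 0$, i.e.\ $F(x^k) \to F(x^*)$. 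Since $F$ is coercive (from $\|x\|_p^p \ge \max_i |x_i|^p$), this makes $\{x^k\}$ bounded, and the standard fact that a bounded sequence with $\|x^{k+1}-x^k\| \to 0$ has a connected set of accumulation points $L$ applies. Continuity forces $F \equiv F(x^*)$ on $L$, so the second-order growth property (Theorem \ref{thm-SOG}(iii)) gives $L \cap \mathbf{B}(x^*, \delta) = \{x^*\}$ for some $\delta > 0$. Hence $x^*$ is an isolated point of the connected set $L$, forcing $L = \{x^*\}$ and $x^k \to x^*$.

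For part (ii), the strategy is to reduce to a smooth, strongly convex problem on the support $I = {\rm supp}(x^*)$. Condition (H2$^\circ$) and \eqref{eq-epsilon} (which in particular yields $\epsilon_k \to 0$) together with (i) trigger Proposition \ref{lem-supp}(ii): there exists $N$ with ${\rm supp}(x^k) = I$ for all $k \ge N$. Writing $y^k := x^k_I$ and $y^* := x^*_I$, equation \eqref{eq-g-Fp} gives $F(x^k) = f(y^k)$ for $k \ge N$, while Theorem \ref{thm-SOG}(ii) supplies $\nabla^2 f(y^*) \succ 0$, so $f$ is $\mu$-strongly convex on some ball $\mathbf{B}(y^*, r)$ that eventually contains all $y^k$. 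The essential subdifferential observation is that for any $w^{k+1} \in \partial F(x^{k+1})$ the $I$-coordinates of $w^{k+1}$ coincide with $\nabla f(y^{k+1})$ (because $F$ is smooth along each $x_i$ with $i \in I$ at such points), so $\|\nabla f(y^{k+1})\| \le \|w^{k+1}\|$. Combining this with $(x^{k+1} - x^k)_{I^c} = 0$ for $k \ge N$, the conditions (H1$^\circ$) and (H2$^\circ$) translate to
\[
f(y^{k+1}) - f(y^k) \le -\alpha\|y^{k+1} - y^k\|^2 + \epsilon_k^2, \qquad \|\nabla f(y^{k+1})\| \le \beta\|y^{k+1} - y^k\| + \epsilon_k.
\]

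Strong convexity of $f$ on $\mathbf{B}(y^*, r)$ yields the Polyak--{\L}ojasiewicz inequality $f(y^{k+1}) - f(y^*) \le \|\nabla f(y^{k+1})\|^2/(2\mu)$. Using $(a+b)^2 \le 2a^2 + 2b^2$ on the subgradient bound and substituting the descent estimate $\|y^{k+1} - y^k\|^2 \le \alpha^{-1}(f(y^k) - f(y^{k+1}) + \epsilon_k^2)$, a short rearrangement produces a contraction
\[
a_{k+1} \le \eta\, a_k + C'\,\epsilon_k^2 \qquad (k \ge N), \quad a_k := F(x^k) - F(x^*),
\]
with $\eta := c/(1+c) \in (0,1)$ for $c := \beta^2/(\mu\alpha)$ and some $C' > 0$. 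Since \eqref{eq-epsilon} yields $\limsup \epsilon_{k+1}^2/\epsilon_k^2 < 1$, Lemma \ref{lem-LC-seq} (applied with $\delta_k := C'\epsilon_k^2$) produces $\theta \in (0,1)$ and $K>0$ with $F(x^k) - F(x^*) \le K\theta^k$ for $k \ge N$; a second application of Theorem \ref{thm-SOG}(iii) then upgrades this to $\|x^k - x^*\| \le K'\theta^{k/2}$ for some $K'>0$. Taking $\eta_0 := \sqrt{\theta}$ and a constant $C$ large enough to absorb the finitely many iterates $k<N$ yields both bounds of \eqref{eq-thm-LC}. The main obstacle is recognizing the right reduction: once support stabilization turns (ii) into a smooth, strongly convex problem on $\R^s$, the contraction is a standard PL-type argument, but without this step the non-smoothness of $\|\cdot\|_p^p$ at the zero coordinates of $x^*$ would be a genuine difficulty.
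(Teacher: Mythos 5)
Your proposal is correct, but it reaches the conclusion by a genuinely different route than the paper. For part (i), the paper passes through Corollary \ref{coro-iso} ($x^*$ is an isolated \emph{critical point}) and then cites a standard argument from Bredies et al.; you instead show $F(x^k)\to F(x^*)$ directly via the monotone quantity $F(x^k)-F(x^*)+\sum_{j\ge k}\epsilon_j^2$, invoke coercivity and the Ostrowski connectedness of the accumulation set, and use the second-order growth property of Theorem \ref{thm-SOG}(iii) to isolate $x^*$ inside that set -- a self-contained argument that, like the paper's, needs only (H1$^\circ$) and \eqref{eq-epi}. For part (ii), the paper never uses strong convexity or a PL inequality explicitly: after support stabilization it works with convexity of $f$ and concavity of $\varphi$ near $x_I^*$, bounds $r_k\le\langle\nabla f(x_I^k),x_I^k-x_I^*\rangle$, splits this into several inner-product estimates (using (H2$^\circ$), the Lipschitz bound on $\nabla\varphi$, and the growth bound $\|x^{k+1}-x^*\|^2\le r_{k+1}/\epsilon$), and only then arrives at $r_{k+1}\le\bar\eta r_k+\bar c\,\epsilon_k^2$. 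Your reduction to the smooth problem on the support, followed by local strong convexity of $f$ (from $\nabla^2 f(x_I^*)\succ 0$) and the resulting Polyak--{\L}ojasiewicz inequality, produces the same contraction in two lines and with an explicit rate $\eta=c/(1+c)$; the identification $w^{k+1}_I=\nabla f(x_I^{k+1})$ that you flag is exactly the step the paper also uses (implicitly) when it writes $\|\nabla f(x_I^{k+1})\|\le\|w^{k+1}\|$. From the contraction onward the two proofs coincide: Lemma \ref{lem-LC-seq} (applied to the tail $k\ge N$, after re-indexing) and a second use of the growth property to convert the function-value rate into the iterate rate in \eqref{eq-thm-LC}. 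Two small points to tidy up: your argument for (ii) as written invokes Theorem \ref{thm-SOG}(ii), which is only stated for $x^*\neq 0$, so the case $x^*=0$ must be dispatched separately -- as the paper does, and as your own support-stabilization step already permits, since then $x^k=0$ for all large $k$; and \eqref{eq-epsilon} should be read as implying $\epsilon_k\to 0$ geometrically, which is what licenses both Proposition \ref{lem-supp}(ii) and the hypothesis $\limsup_k\delta_{k+1}/\delta_k<1$ of Lemma \ref{lem-LC-seq} with $\delta_k=C'\epsilon_k^2$.
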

\begin{proof}
(i) It follows from Proposition \ref{lem-supp}(i) that $\lim_{k\to \infty} \|x^{k+1}-x^k\|=0$.
By the assumption that $x^*$ is a local minimum of problem \eqref{eq-lp}, it follows from Lemma \ref{coro-iso} that $x^*$ is an isolated critical point of problem \eqref{eq-lp}. Then, we can prove that $\{x^k\}$ converges to $x^*$ (the proof is standard; see, e.g., the proof of \cite[Proposition 2.3]{Bredies2015}).

(ii) If $x^*=0$, it follows from Proposition \ref{lem-supp}(ii) that there exists $N\in \IN$ such that $x^k=0$ for each $k\ge N$, and so the conclusion holds. Then it remains to prove \eqref{eq-thm-LC} for the case when $x^*\neq 0$.

Suppose that $x^*\neq 0$. Recall that $I={\rm supp}(x^*)$ is defined by \eqref{eq-sI}. By the assumption that $x^*$ is a local minimum of problem \eqref{eq-lp}, assertions (ii) and (iii) of Theorem \ref{thm-SOG} are satisfied; hence, it follows from \eqref{eq-2nd} and \eqref{eq-dg} that
$2A_I^{\top}A_I+\nabla^2 \varphi(x^*_I)=\nabla^2 f(x^*_I)\succ 0$.
This, together with $x^*_I\in \R_{\neq}^s$ (cf. \eqref{eq-sI}) and the smoothness of $\varphi$ at $x_I^*$, implies that there exist $\epsilon>0$, $\delta>0$ and $L_{\varphi}>0$ such that \eqref{eq-sgc} holds and
\begin{equation}\label{eq-LC-2a}
\mathbf{B}(x^*_I,\delta)\subseteq \R_{\neq}^s\cap \{y\in \R^s:\nabla^2 \varphi(y)\succ -2A_I^\top A_I\},
\end{equation}
\begin{equation*}\label{eq-thm-n1}
\|\nabla \varphi(y)-\nabla \varphi(z)\|\le L_{\varphi}\|y-z\|\quad \mbox{for any } y,z\in \mathbf{B}(x_I^*,\delta).
\end{equation*}
By assertion (i) of this theorem that $\{x^k\}$ converges to $x^*$, there exists $N\in \IN$ such that \eqref{eq-lem-supp} holds (by Proposition \ref{lem-supp}(ii)) and $x_I^k\in \mathbf{B}(x^*_I,\delta)$ for each $k\ge N$.
In particular, the following relations hold for each $k\ge N$:
\begin{equation}\label{eq-thm-1a}
F(x^{k+1})\ge F(x^*)+\epsilon\|x^{k+1}-x^*\|^2,
\end{equation}
and
\begin{equation}\label{eq-thm-1}
\|\nabla \varphi(x_I^k)-\nabla \varphi(x_I^{k+1})\|\le L_{\varphi}\|x_I^k-x_I^{k+1}\|.
\end{equation}
Noting by  \eqref{eq-dg} and \eqref{eq-LC-2a} that
\[
\nabla^2 \varphi(w)\prec 0\quad \mbox{and}\quad \nabla^2 f(w)\succ 0\quad \mbox{ for any } w\in \mathbf{B}(x^*_I,\delta),
\]
it follows that $\varphi$ is concave and $f$ is convex on $\mathbf{B}(x^*_I,\delta)$.
Fix $k\ge N$. Then one has that
\begin{equation}\label{concave}
  \langle \nabla \varphi(x_I^k), x_I^k-x_I^{k+1}\rangle\le\varphi(x_I^k)-\varphi(x_I^{k+1})
\end{equation}
and
\begin{equation}\label{convex}
 f(x_I^k)-f(x^*_I)\le \langle \nabla f(x_I^k), x_I^k-x^*_I \rangle
\end{equation}
(as $x_I^k, x_I^{k+1}\in \mathbf{B}(x^*_I,\delta)$). 
To proceed, we define
\begin{equation}\label{eq-LC-b0a}
r_k:=F(x^{k})-F(x^*)\quad \mbox{for each } k\in \IN,
\end{equation}
and then it follows from \eqref{eq-lem-supp} and \eqref{eq-g-Fp} that
\begin{equation}\label{eq-LC-b0}
r_k=f(x_I^k)-f(x^*_I).
\end{equation}
Hence,  using  \eqref{convex}, we obtain  that
\begin{equation}\label{eq-LC-b1}
r_k\le \langle \nabla f(x_I^k), x_I^k-x^*_I \rangle= \langle \nabla f(x_I^k), x_I^k-x_I^{k+1} \rangle + \langle \nabla f(x_I^k), x_I^{k+1}-x^*_I \rangle.
\end{equation}
By \eqref{eq-fun-g} and \eqref{concave}, it follows that
\begin{equation*}\label{eq-LC-b6}
\begin{array}{lll}
\langle \nabla f(x_I^k), x_I^k-x_I^{k+1} \rangle&= \langle \nabla h(x_I^k), x_I^k-x_I^{k+1} \rangle+\langle\nabla \varphi(x_I^k), x_I^k-x_I^{k+1} \rangle\\
&\le \langle \nabla h(x_I^k), x_I^k-x_I^{k+1} \rangle+\varphi(x_I^k)-\varphi(x_I^{k+1}).
\end{array}
\end{equation*}
Recall from \eqref{eq-fun-g} that $\nabla h(x_I^k)=2A_I^\top(A_Ix_I^k-b)$. Then, by \eqref{eq-Axb} (with $A_I$, $x_I^{k+1}$, $x_I^{k+1}$ in place of $A$, $y$, $x$), we have that
\begin{equation}\label{eq-LC-b7}
\begin{array}{lll}
\langle \nabla f(x_I^k), x_I^k-x_I^{k+1} \rangle
&\le f(x_I^k)-f(x_I^{k+1})+\|A_I(x_I^{k+1}-x_I^k)\|^2\\
&\le r_k-r_{k+1}+\|A\|^2\|x^{k+1}-x^k\|^2
\end{array}
\end{equation}
(due to \eqref{eq-LC-b0}). On the other hand, one has that
\begin{equation}\label{eq-LC-b1c00}
\begin{array}{lll}
\langle \nabla f(x_I^k), x_I^{k+1}-x^*_I \rangle=\langle \nabla f(x_I^{k+1}), x_I^{k+1}-x^*_I \rangle+\langle \nabla f(x_I^k)-\nabla f(x_I^{k+1}), x_I^{k+1}-x^*_I \rangle.
\end{array}
\end{equation}
By the assumption (H2$^\circ$), we obtain that
\[
\begin{array}{lll}
\langle \nabla f(x_I^{k+1}), x_I^{k+1}-x^*_I \rangle&\le \|\nabla f(x_I^{k+1})\| \| x_I^{k+1}-x^*_I\|\\
&\le   \|w^{k+1}\|\|x_I^{k+1}-x^*_I\|\\
&\le  \beta \|x^{k+1}-x^k\|\|x^{k+1}-x^*\|+\epsilon_k\|x^{k+1}-x^*\|;
\end{array}
\]
while by \eqref{eq-fun-g} and \eqref{eq-thm-1}, we conclude  that
\begin{equation*}\label{eq-thm-n4+}
\begin{array}{lll}
&\langle \nabla f(x_I^k)-\nabla f(x_I^{k+1}), x_I^{k+1}-x^*_I \rangle\\
&=\langle \nabla h(x_I^k)-\nabla h(x_I^{k+1})+ \nabla \varphi(x_I^k)-\nabla \varphi(x_I^{k+1}), x_I^{k+1}-x^*_I \rangle\\
&\le (2\|A\|^2+L_{\varphi})\|x_I^{k+1}-x_I^k\|\|x_I^{k+1}-x^*_I\|\\
&\le (2\|A\|^2+L_{\varphi})\|x^{k+1}-x^k\|\|x^{k+1}-x^*\|.
\end{array}
\end{equation*}
Combining the above two inequalities, it follows from \eqref{eq-LC-b1c00} that
\[
\langle \nabla f(x_I^k), x_I^{k+1}-x^*_I \rangle\le \left(\beta+2\|A\|^2+L_{\varphi}\right) \|x^{k+1}-x^k\|\|x^{k+1}-x^*\|+\epsilon_k\|x^{k+1}-x^*\|.
\]
Let
\begin{equation}\label{eq-LC-c1}
\sigma:=\beta+2\|A\|^2+L_{\varphi}\quad \mbox{and}\quad \tau\in (0,\epsilon).
\end{equation}
Then one has that
\begin{eqnarray}
\langle \nabla f(x_I^k), x_I^{k+1}-x^*_I \rangle&\le& \frac{\sigma^2}{2\tau}\|x^{k+1}-x^k\|^2+\frac{\tau}{2}\|x^{k+1}-x^*\|^2
+\frac{1}{2\tau}\epsilon_k^2+\frac{\tau}{2}\|x^{k+1}-x^*\|^2\nonumber \\
&=& \frac{\sigma^2}{2\tau}\|x^{k+1}-x^k\|^2+\tau\|x^{k+1}-x^*\|^2+\frac{1}{2\tau}\epsilon_k^2. \nonumber
\end{eqnarray}
This, together with \eqref{eq-LC-b1} and \eqref{eq-LC-b7}, shows  that
\begin{equation}\label{eq-LC-b2}
r_k\le r_k-r_{k+1}+\left(\|A\|^2+\frac{\sigma^2}{2\tau}\right)\|x^{k+1}-x^k\|^2+\tau\|x^{k+1}-x^*\|^2+\frac{1}{2\tau}\epsilon_k^2.
\end{equation}
Recalling \eqref{eq-LC-b0a}, we obtain by the assumption (H1$^\circ$) that
\[
\|x^{k+1}-x^k\|^2\le \frac1{\alpha}\left(F(x^k)-F(x^{k+1})\right)+\frac{1}{\alpha}\epsilon_k^2=\frac1{\alpha}(r_k-r_{k+1})+\frac{1}{\alpha}\epsilon_k^2,
\]
and by \eqref{eq-thm-1a} that
\[
\|x^{k+1}-x^*\|^2\le \frac1{\epsilon}\left(F(x^{k+1})-F(x^*)\right)=\frac1{\epsilon}r_{k+1}.
\]
Hence, \eqref{eq-LC-b2} reduces to
\[
r_k\le r_k-r_{k+1}+\frac{2\tau\|A\|^2+\sigma^2}{2\tau\alpha}(r_k-r_{k+1})+\frac{2\tau\|A\|^2
+\sigma^2}{2\tau\alpha}\epsilon_k^2+\frac{\tau}{\epsilon}r_{k+1}+\frac{1}{2\tau}\epsilon_k^2,
\]
that is,
\begin{equation}\label{eq-LC-b4}
r_{k+1}\le \left(1-\frac{1-\frac{\tau}{\epsilon}}{1+\frac{2\tau\|A\|^2+\sigma^2}{2\tau\alpha}-\frac{\tau}{\epsilon}}\right)r_k
+\left(\frac{2\tau\|A\|^2+\sigma^2+\alpha}{2\tau\alpha+2\tau\|A\|^2+\sigma^2-2\tau^2\alpha\frac{1}{\epsilon}}\right)\epsilon_k^2.
\end{equation}
Let
$$\bar{\eta}:=1-\frac{1-\frac{\tau}{\epsilon}}{1+\frac{2\tau\|A\|^2+\sigma^2}{2\tau\alpha}-\frac{\tau}{\epsilon}}
\quad \mbox{and} \quad
\bar{c}:=\frac{2\tau\|A\|^2+\sigma^2+\alpha}{2\tau\alpha+2\tau\|A\|^2+\sigma^2-2\tau^2\alpha\frac{1}{\epsilon}}.$$
Then \eqref{eq-LC-b4} reduces to 
\[
r_{k+1}\le \bar{\eta} r_k + \bar{c} \epsilon^2_k\quad \mbox{for each } k\ge N.
\]
One can check that $0< \bar{\eta}<1$ and $\bar{c}>0$ by \eqref{eq-LC-c1}, and note \eqref{eq-epsilon}. Applying Lemma \ref{lem-LC-seq} (with $r_k$, $\bar{\eta}$ and $\bar{c} \epsilon^2_k$ in place of $a_k$, $\eta$ and $\delta_k$), there exist $\theta\in (0,1)$ and $K>0$ such that
\begin{equation*}\label{eq-LC-4a}
F(x^{k})-F(x^*)=r_k\le K \theta^k \quad \mbox{for each } k\ge N
\end{equation*}
(by \eqref{eq-LC-b0a}).
Furthermore, using \eqref{eq-thm-1a}, we have that
\[
\|x^k-x^*\|\le \left(\frac{F(x^{k})-F(x^*)}{\epsilon}\right)^{\frac12}\le \left(\frac{K}{\epsilon}\right)^{\frac12}  \left(\sqrt{\theta}\right)^k\quad \mbox{for each } k\ge N.
\]
This shows that  \eqref{eq-thm-LC} holds  with $C:=\max\left\{K,\left(\frac{K}{\epsilon}\right)^{\frac12}\right\} $ and $\eta:=\sqrt{\theta}$. The proof is complete.
\end{proof}

\begin{remark}
{\rm
It is worth noting in \eqref{eq-thm-LC} that the linear convergence of $\{F(x^{k})\}$ to $F(x^*)$ is a direct consequence of that of $\{x^{k}\}$ to $x^*$. Indeed, recalling from \cite[Lemma 2]{HuJMLR17} that $\|x\|_p^p-\|y\|_p^p\le \|x-y\|_p^p$ for any $x,y \in \R^n$, we obtain by \eqref{eq-Flp} that
\[
F(x^{k})-F(x^*)\le \|A\|^2\|x^{k}-x^*\|^2+\lambda\|x^{k}-x^*\|_p^p.
\]
}
\end{remark}

As an application of Theorem \ref{thm-LC} for the case when $\epsilon_k \equiv 0$, the linear convergence of the descent methods investigated in \cite{AttouchBolte10,AttouchBolte13} for solving the $\ell_p$ regularization problem \eqref{eq-lp}
is presented in the following theorem.

\begin{theorem}\label{thm-LC-DM}
Let $\{x^k\}$ be a sequence satisfying ${\rm (H1)}$ and ${\rm (H2)}$. Then $\{x^k\}$ converges to a critical point $x^*$ of problem \eqref{eq-lp}.
Suppose that $x^*$ is a local minimum of problem \eqref{eq-lp}. Then $\{x^k\}$ converges linearly to $x^*$.
\end{theorem}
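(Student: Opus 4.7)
The plan is to realize both statements as specializations of Theorem \ref{thm-LC} to $\epsilon_k\equiv 0$, together with the Kurdyka--{\L}ojasiewicz (KL) global convergence machinery for the first assertion. Since (H1$^\circ$) and (H2$^\circ$) collapse to (H1) and (H2) when all error terms vanish, most of the technical work has already been carried out in the preceding section, and what remains is essentially bookkeeping.

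For the convergence to a critical point, I would invoke the abstract descent framework of Attouch, Bolte, and Svaiter \cite{AttouchBolte13}. The objective $F$ in \eqref{eq-Flp} is a KL function in the sense of \cite[Definition 2.4]{AttouchBolte13}, and hypothesis (H3) is automatic for $F$ as recalled in the paper's own footnote; hence \cite{AttouchBolte13} yields convergence $x^k\to x^*$ for some $x^*\in\R^n$. To see that $x^*$ is a critical point, telescope (H1) to obtain $\sum_{k}\|x^{k+1}-x^k\|^2<+\infty$, so $\|x^{k+1}-x^k\|\to 0$; then (H2) yields $\|w^{k+1}\|\to 0$ with $w^{k+1}\in\partial F(x^{k+1})$, and closedness of the limiting subdifferential delivers $0\in\partial F(x^*)$.

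For the linear convergence, assuming $x^*$ is a local minimum, I would apply Theorem \ref{thm-LC}(ii) with $\epsilon_k\equiv 0$. Conditions (H1$^\circ$), (H2$^\circ$), and the summability \eqref{eq-epi} are then trivially met, while \eqref{eq-epsilon} is to be interpreted via the convention $0/0:=0$. The one place to tread carefully is the appeal to Lemma \ref{lem-LC-seq} inside the proof of Theorem \ref{thm-LC}(ii): inspecting that proof, when $\delta_k\equiv 0$ the inequality $\delta_{k+1}\le \tau^2\delta_k$ holds for every $\tau\in(0,1)$, so the conclusion $a_k\le K\theta^k$ is produced without modification. Theorem \ref{thm-LC}(ii) then delivers both estimates in \eqref{eq-thm-LC}, which is the asserted linear rate. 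The only genuine subtlety is this degenerate reading of \eqref{eq-epsilon}, and the one-line inspection of Lemma \ref{lem-LC-seq} just described resolves it cleanly; everything else is a direct specialization of the preceding development.
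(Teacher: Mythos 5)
Your proposal is correct and follows essentially the route the paper intends: the first assertion rests on the KL descent framework of \cite{AttouchBolte13} (with criticality of the limit obtained from (H1), (H2) and closedness of the limiting subdifferential), and the linear rate is obtained by specializing Theorem \ref{thm-LC}(ii) to $\epsilon_k\equiv 0$. Your explicit check that the degenerate case $\delta_k\equiv 0$ causes no trouble in Lemma \ref{lem-LC-seq} (indeed $a_{k+1}\le\eta a_k$ already gives the geometric decay) cleanly handles the only point where condition \eqref{eq-epsilon} is formally ill-posed.
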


\section{Linear convergence of inexact proximal gradient algorithms}
The main purpose of this section is to investigate the linear convergence rate of two inexact PGAs for solving the $\ell_p$ regularization problem \eqref{eq-lp}.
Associated to problem \eqref{eq-PPA}, we denote the (inexact) proximal operator of the $\ell_p$ regularizer by
\begin{equation}\label{eq-PG}
\mathcal{P}_{v,\epsilon}(x):=\epsilon\mbox{-}{\rm arg}\min_{y\in \R^n}\left\{\lambda \|y\|_p^p+\frac{1}{2v}\|y-x\|^2\right\}.
\end{equation}
In the special case when $\epsilon = 0$, we write $\mathcal{P}_{v}(x)$ for $\mathcal{P}_{v,0}(x)$ for simplicity.
Recall that functions $F$ and $H$ are defined by \eqref{eq-Flp}. It is clear that the iterative formula of Algorithm PGA is
\[
x^{k+1}\in \mathcal{P}_{v_k}\left(x^k-v_k\nabla H\left(x^k\right)\right).
\]
Some useful properties of the proximal operator of the $\ell_p$ regularizer are presented as follows.
\begin{proposition}\label{lem-LB}
Let $v>0$, $\epsilon>0$, $x\in \R^n$, $\xi\in \R^n$, $y\in\mathcal{P}_{v}(x-v\nabla H(x))$  and $z\in\mathcal{P}_{v,\epsilon}(x-v(\nabla H(x)+\xi))$. Then the following assertions are true.
\begin{enumerate}[{\rm (i)}]
  \item $F(z)-F(x)\le -\left(\frac{1}{2v}-\|A\|^2\right)\|z-x\|^2-\langle z-x, \xi\rangle+\epsilon$.
  \item For each $i\in \IN$, the following implication holds
    \begin{equation*}\label{eq-supp-1}
    y_i\neq 0 \quad \Rightarrow \quad |y_i|\ge \left(v \lambda p (1-p)\right)^{\frac{1}{2-p}}.
    \end{equation*}
\end{enumerate}
\end{proposition}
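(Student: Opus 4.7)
\textbf{Plan for Proposition~\ref{lem-LB}.} The proposition has two independent parts, and I would prove them by two essentially unrelated arguments: part (i) is a computation from the defining inequality of an $\epsilon$-minimizer together with the Taylor identity \eqref{eq-Axb} for $H$, while part (ii) is a one‑dimensional second‑order necessary optimality argument.

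\textbf{Part (i).} Since $z\in\mathcal P_{v,\epsilon}(x-v(\nabla H(x)+\xi))$, the defining inequality of $\epsilon$-arg min, evaluated at the comparison point $x$, reads
\[
\lambda\|z\|_p^p+\frac{1}{2v}\bigl\|z-x+v(\nabla H(x)+\xi)\bigr\|^2
\ \le\ \lambda\|x\|_p^p+\frac{1}{2v}\bigl\|v(\nabla H(x)+\xi)\bigr\|^2+\epsilon.
\]
Expanding the squared norm on the left and cancelling the common term $\tfrac12 v\|\nabla H(x)+\xi\|^2$ yields
\[
\lambda\|z\|_p^p-\lambda\|x\|_p^p+\frac{1}{2v}\|z-x\|^2+\langle z-x,\nabla H(x)\rangle+\langle z-x,\xi\rangle\ \le\ \epsilon.
\]
Now I would apply the elementary identity \eqref{eq-Axb} to rewrite $\langle z-x,\nabla H(x)\rangle=H(z)-H(x)-\|A(z-x)\|^2$ and bound $\|A(z-x)\|^2\le \|A\|^2\|z-x\|^2$. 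Combining this with $\Phi(z)-\Phi(x)=\lambda\|z\|_p^p-\lambda\|x\|_p^p$ and $F=H+\Phi$ rearranges to exactly the claimed inequality.

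\textbf{Part (ii).} The minimization defining $y\in\mathcal P_v(x-v\nabla H(x))$ is separable across coordinates. Writing $w:=x-v\nabla H(x)$, each component $y_i$ is a global minimizer of the one‑variable function
\[
g_i(t):=\lambda|t|^p+\frac{1}{2v}(t-w_i)^2.
\]
If $y_i\neq 0$ then $g_i$ is $C^2$ in a neighborhood of $y_i$, so the second‑order necessary optimality condition gives $g_i''(y_i)\ge 0$, i.e.
\[
\lambda p(p-1)|y_i|^{p-2}+\frac{1}{v}\ \ge\ 0,
\]
which rearranges to $|y_i|^{2-p}\ge v\lambda p(1-p)$ and hence to the stated lower bound $(v\lambda p(1-p))^{1/(2-p)}$.

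\textbf{Anticipated difficulty.} Neither step presents a substantial obstacle: part (i) is a bookkeeping exercise once the correct comparison point $x$ is inserted into the $\epsilon$-arg min inequality, and the only subtlety is to remember the Taylor identity \eqref{eq-Axb} in order to produce the $-\|A\|^2\|z-x\|^2$ term from $\langle z-x,\nabla H(x)\rangle$. Part (ii) requires just the observation that the proximal subproblem is coordinate‑separable so that a one‑variable second‑order test applies. The only point that warrants care is justifying that $y_i$ really is a (global) minimizer of the one‑dimensional problem $g_i$, which follows directly from the separability and the fact that $y$ is itself a global minimizer of the vector problem; once this is noted the second‑order necessary condition at any nonzero minimum closes the argument.
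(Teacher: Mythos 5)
Your proposal is correct and follows essentially the same route as the paper: part (i) is obtained by inserting the comparison point $x$ into the $\epsilon$-argmin inequality and invoking the identity \eqref{eq-Axb} (with $\nabla H(x)=2A^\top(Ax-b)$), and part (ii) uses coordinate separability plus the one-dimensional second-order necessary condition $\lambda p(p-1)|y_i|^{p-2}+\frac1v\ge 0$, exactly as in the paper. Your explicit remark on separability merely spells out what the paper leaves implicit.
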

\begin{proof}
(i) Recall that $H$ and $\Phi$ are defined by \eqref{eq-Flp}, that is, $H(\cdot)=\|A\cdot-b\|^2$ and $\Phi(\cdot)=\lambda\|\cdot\|_p^p$. It follows from \eqref{eq-PG} that
\[
\Phi(z)+\frac{1}{2v}\|z-(x-v (\nabla H(x)+\xi))\|^2\le \Phi(x)+\frac{1}{2v}\|v (\nabla H(x)+\xi)\|^2+\epsilon,
\]
that is,
\[
\Phi(z)-\Phi(x)\le -\frac{1}{2v}\|z-x\|^2-\langle z-x, 2A^\top (Ax-b)\rangle-\langle z-x, \xi\rangle+\epsilon.
\]
Combining this with \eqref{eq-Axb}, we prove assertion (i) of this theorem.

(ii) Let  $i\in \IN$ be such that $y_i\neq 0$.
Then, by \eqref{eq-PG} (with $\epsilon=0$), one has that
\begin{equation*}\label{eq-GPA-LB-1}
y_i\in {\rm arg}\min_{t\in \R}\left\{\lambda |t|^p+\frac{1}{2v}\left(t-(x-v\nabla H(x))_i\right)^2\right\}.
\end{equation*}
Thus, using its second-order necessary condition, we obtain that $\lambda p(p-1)|y_i|^{p-2}+\frac1{v}\ge 0$;
consequently, $|y_i|\ge \left(v \lambda p (1-p)\right)^{\frac{1}{2-p}}$.  The proof is complete.
\end{proof}

Inspired by the ideas in the seminal work of Rockafellar \cite{roc76}, we propose the following two types of inexact PGAs.

{\scshape Algorithm IPGA-I}.\,
Given an initial point $x^0\in \R^n$, a sequence of stepsizes $\{v_k\}\subseteq \R_+$ and a sequence of inexact terms $\{\epsilon_k\}\subseteq \R_+$. For each $k\in \IN$, having $x^k$, we determine $x^{k+1}$ by
\begin{equation}\label{eq-PGA-II}
x^{k+1}\in \mathcal{P}_{v_k,\epsilon_k}\left(x^k-v_k\nabla H\left(x^k\right)\right).
\end{equation}

{\scshape Algorithm IPGA-II}.\,
Given an initial point $x^0\in \R^n$, a sequence of stepsizes $\{v_k\}\subseteq \R_+$ and a sequence of inexact terms $\{\epsilon_k\}\subseteq \R_+$. For each $k\in \IN$, having $x^k$, we determine $x^{k+1}$ satisfying
\begin{equation}\label{eq-PGA-I}
{\rm dist} \left(x^{k+1},\mathcal{P}_{v_k}\left(x^k-v_k\nabla H\left(x^k\right)\right)\right)\le \epsilon_k.
\end{equation}

\begin{remark}\label{rem-IDM}
{\rm
(i) Algorithms IPGA-I and IPGA-II adopts two popular inexact schemes in the calculation of proximal operators, respectively: Algorithm IPGA-I (resp., Algorithm IPGA-II) measures the inexact term by the approximation of proximal regularized function value (resp., by the distance of the iterate to the exact proximal operator). The latter type of inexact scheme is commonly considered in theoretical analysis, while the former one is more attractive to implement in practical applications. Recently, Frankel et al. \cite{Frankel-JOTA} proposed an inexact PGA (based on a similar inexact scheme to Algorithm IPGA-II) for solving the general 
problem \eqref{eq-CCO}.

(ii) Neither Algorithms IPGA-I nor IPGA-II satisfies both conditions (H1$^\circ$) and (H2$^\circ$) of the inexact descent method mentioned in section 4. Indeed, if both conditions (H1$^\circ$) and (H2$^\circ$) are satisfied, then Lemma \ref{lem-supp} ensures a consistent property of the support of $\{x^k\}$ to $x^*$(cf. \eqref{eq-lem-supp}), which is impossible for either Algorithms IPGA-I or IPGA-II. In particular, Algorithms IPGA-I only satisfies condition (H1$^\circ$) (shown in the proof of Theorem \ref{thm-global}), while neither (H1$^\circ$) nor (H2$^\circ$) can be shown for Algorithms IPGA-II.
}
\end{remark}

Using Theorem \ref{thm-LC}, the global convergence result of Algorithm IPGA-I is presented in the following theorem.
However, we are not able to prove the global convergence of Algorithm IPGA-II at this moment.
\begin{theorem}\label{thm-global}
Let $\{x^k\}$ be a sequence generated by Algorithm IPGA-I with $\{v_k\}$ satisfying
\begin{equation}\label{eq-stepsize}
0<\underline{v}\le v_k\le \bar{v}<\frac12\|A\|^{-2}\quad \mbox{for each } k\in \IN.
\end{equation}
and $\{\epsilon_k\}$ satisfying \eqref{eq-epi}. Suppose that one of limiting points of $\{x^k\}$, denoted by $x^*$, is a local minimum of problem \eqref{eq-lp}. Then $\{x^k\}$ converges to $x^*$.
\end{theorem}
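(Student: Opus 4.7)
The plan is to reduce Theorem~\ref{thm-global} to Theorem~\ref{thm-LC}(i) by verifying that the IPGA-I iterates satisfy condition (H1$^\circ$); this is what Remark~\ref{rem-IDM}(ii) alludes to when it asserts that IPGA-I satisfies (H1$^\circ$) but not (H2$^\circ$). Since Theorem~\ref{thm-LC}(i) only invokes (H1$^\circ$), the summability \eqref{eq-epi}, and the assumption that $x^*$ is a local minimum, establishing (H1$^\circ$) is all that is needed, and the remaining conclusions can be inherited directly.

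To obtain (H1$^\circ$), I would apply Proposition~\ref{lem-LB}(i) with the substitutions $\xi=0$, $x=x^k$, $z=x^{k+1}$, $v=v_k$, and $\epsilon=\epsilon_k$. The choice $\xi=0$ is legitimate because the IPGA-I update \eqref{eq-PGA-II} is precisely $x^{k+1}\in \mathcal{P}_{v_k,\epsilon_k}(x^k-v_k\nabla H(x^k))$. This yields
\[
F(x^{k+1})-F(x^k)\le -\Bigl(\frac{1}{2v_k}-\|A\|^2\Bigr)\|x^{k+1}-x^k\|^2+\epsilon_k.
\]
The stepsize restriction \eqref{eq-stepsize} makes $\alpha:=\frac{1}{2\bar{v}}-\|A\|^2>0$ uniformly in $k$, so (H1$^\circ$) is fulfilled with the inexact term controlled by $\epsilon_k$, and the summability hypothesis \eqref{eq-epi} supplies the remaining assumption of Theorem~\ref{thm-LC}(i).

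Applying Theorem~\ref{thm-LC}(i) then delivers $x^k\to x^*$. Internally, this argument passes through Proposition~\ref{lem-supp}(i) (which upgrades (H1$^\circ$) to $\sum\|x^{k+1}-x^k\|^2<\infty$, and in particular $\|x^{k+1}-x^k\|\to 0$), Corollary~\ref{coro-iso} (which promotes the local minimum $x^*$ to an isolated critical point of \eqref{eq-lp}), and a standard Ostrowski-type argument in the spirit of \cite[Proposition~2.3]{Bredies2015}. The main subtlety, and the only part that requires something specific to IPGA-I beyond the generic inexact-descent machinery, is showing that every limit point of $\{x^k\}$ is a critical point of \eqref{eq-lp}, so that the isolation of $x^*$ among critical points translates into isolation among limit points. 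This follows by passing to the limit along any convergent subsequence in the inexact first-order optimality condition for the proximal subproblem defining $x^{k+1}$, using $\|x^{k+1}-x^k\|\to 0$, $\epsilon_k\to 0$, continuity of $\nabla H$, and the closedness of the limiting subdifferential of $\|\cdot\|_p^p$ away from zero; once this is in place, the standard argument rules out any second limit point in a small enough ball around $x^*$.
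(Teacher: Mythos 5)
Your proposal matches the paper's proof: both apply Proposition \ref{lem-LB}(i) with $\xi=0$, $z=x^{k+1}$, $x=x^k$, $v=v_k$, $\epsilon=\epsilon_k$ together with the stepsize bound \eqref{eq-stepsize} to verify (H1$^\circ$) with $\alpha=\frac{1}{2\bar v}-\|A\|^2>0$, and then invoke Theorem \ref{thm-LC}(i) (the internal mechanics you describe---Proposition \ref{lem-supp}(i), Corollary \ref{coro-iso}, the Ostrowski-type argument---are exactly what that theorem packages). The only difference is bookkeeping: the error from Proposition \ref{lem-LB}(i) enters as $\epsilon_k$ while (H1$^\circ$) carries $\epsilon_k^2$, so the paper records that (H1$^\circ$) holds with $\sqrt{\epsilon_k}$ in place of $\epsilon_k$; you gloss over this, but it does not change the structure of the argument.
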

\begin{proof}
In view of Algorithm IPGA-I (cf. \eqref{eq-PGA-II}) and by Proposition \ref{lem-LB}(i) (with $x^{k+1}$, $x^k$, $v_k$, 0, $\epsilon_k$ in place of $z$, $x$, $v$, $\xi$, $\epsilon$), we obtain that
\[
F(x^{k+1})-F(x^k)\le -\left(\frac{1}{2v_k}-\|A\|^2\right)\|x^{k+1}-x^k\|^2+\epsilon_k
\le -\left(\frac{1}{2\bar{v}}-\|A\|^2\right)\|x^{k+1}-x^k\|^2+\epsilon_k
\]
(by \eqref{eq-stepsize}). Note also by \eqref{eq-stepsize} that $\frac{1}{2\bar{v}}-\|A\|^2>0$.
This shows that $\{x^k\}$ satisfies (H1$^\circ$) with $\frac{1}{2\bar{v}}-\|A\|^2$ and $\sqrt{\epsilon_k}$ in place of $\alpha$ and $\epsilon_k$, respectively.
Then the conclusion directly follows from Theorem \ref{thm-LC}(i). The proof is complete.
\end{proof}

Recall that, for the inexact proximal point algorithm (see, e.g., \cite{roc76,WangLGY16}), the inexact term is assumed to have progressively better accuracy to investigate its convergence rate; specifically, it is assumed that $x^{k+1}\in \mathcal{P}_{v_k,\epsilon_k}(x^k)$ with $\epsilon_k=o(\|x^{k+1}-x^{k}\|^2)$ or that ${\rm dist} \left(x^{k+1},\mathcal{P}_{v_k}(x^k)\right)\le o(\|x^{k+1}-x^{k}\|)$. However, we are not able to prove the linear convergence of the inexact PGAs under this assumption of inexact term yet (due to the nonconvexity of the $\ell_p$ regularized function), and we need some additional assumptions to ensure the linear convergence. Recall that $I={\rm supp}(x^*)$ is defined by \eqref{eq-sI}. Let $\{t_k\}\subseteq \R_+$ and $\{\tau_k\}\subseteq \R_+$.
For Algorithms IPGA-I and IPGA-II, we assume
\begin{eqnarray}
x^{k+1}_I\in \mathcal{P}_{v_k,\epsilon_k}\left(\left(x^k-v_k\nabla H(x^k)\right)_I\right) \quad &\mbox{with}& \quad \epsilon_k\le \tau_k\|x_I^{k+1}-x_I^k\|^2, \label{eq-PGA2-1} \\
x^{k+1}_{I^c}\in \mathcal{P}_{v_k,\epsilon_k}\left(\left(x^k-v_k\nabla H(x^k)\right)_{I^c}\right)\quad &\mbox{with}& \quad \epsilon_k\le \tau_k\|x_{I^c}^{k+1}-x_{I^c}^k\|^2,\label{eq-PGA2-2}
\end{eqnarray}
and
\begin{eqnarray}
{\rm dist} \left(x^{k+1}_{I},\left(\mathcal{P}_{v_k}\left(x^k-v_k\nabla H\left(x^k\right)\right)\right)_{I}\right)&\le& t_k\|x_I^{k+1}-x_I^k\|, \label{eq-PGA-c1} \\
{\rm dist}  \left(x^{k+1}_{I^c},\left(\mathcal{P}_{v_k}\left(x^k-v_k\nabla H\left(x^k\right)\right)\right)_{I^c}\right)&\le& t_k\|x_{I^c}^{k+1}-x_{I^c}^k\|,\label{eq-PGA-c2}
\end{eqnarray}
respectively.
Note that \eqref{eq-PGA2-1}-\eqref{eq-PGA2-2} and \eqref{eq-PGA-c1}-\eqref{eq-PGA-c2} are sufficient conditions for guaranteeing \eqref{eq-PGA-II} with $\epsilon_k=t_k\|x^{k+1}-x^{k}\|$ and  \eqref{eq-PGA-I} with $\epsilon_k=t_k\|x^{k+1}-x^{k}\|$, respectively.
(The implementable strategy of inexact PGAs that conditions \eqref{eq-PGA2-1}-\eqref{eq-PGA2-2} or \eqref{eq-PGA-c1}-\eqref{eq-PGA-c2} are satisfied will be proposed at the end of this section.) Now, we establish the linear convergence of the above two inexact PGAs for solving the $\ell_p$ regularization problem under the additional assumptions, respectively. Recall that $f$, $h$ and $\varphi$ are defined by \eqref{eq-fun-g}.
\begin{theorem}\label{thm-IPGA-I}
Let $\{x^k\}$ be a sequence generated by Algorithm IPGA-II with $\{v_k\}$ satisfying \eqref{eq-stepsize}.
Suppose that $\{x^k\}$ converges to a local minimum $x^*$ of problem \eqref{eq-lp}  and that \eqref{eq-PGA-c1} and \eqref{eq-PGA-c2} are satisfied for each $k\in \IN$ with $\lim_{k\to \infty} t_k=0$. Then $\{x^k\}$ converges linearly to $x^*$.
\end{theorem}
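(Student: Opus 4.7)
The plan is to apply Theorem~\ref{thm-LC}(ii) not directly to $\{x^k\}$ but to the restricted sequence $\{x_I^k\}$, viewed as iterates in $\R^s$ for the reduced $\ell_p$ regularization problem with objective $f=h+\varphi$ (which is itself an instance of \eqref{eq-lp} with $A$ replaced by $A_I$). The inexactness conditions \eqref{eq-PGA-c1}--\eqref{eq-PGA-c2} decouple the blocks $I$ and $I^c$, which permits this reduction: the $I^c$-block will be shown to vanish super-linearly and then to act as a summable perturbation on the $I$-block.

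First I would let $y^{k+1}\in\mathcal{P}_{v_k}(x^k-v_k\nabla H(x^k))$ denote the exact proximal iterate, which decouples coordinatewise since $\lambda\|\cdot\|_p^p$ is separable. Using \eqref{eq-PGA-c2} together with $x_{I^c}^k\to 0$, one has $\|y_{I^c}^{k+1}\|\to 0$; combined with the gap bound in Proposition~\ref{lem-LB}(ii), this forces $y_{I^c}^{k+1}=0$ for all sufficiently large $k$. Then \eqref{eq-PGA-c2} with $y_{I^c}^{k+1}=0$ gives $\|x_{I^c}^{k+1}\|\le\frac{t_k}{1-t_k}\|x_{I^c}^k\|$, so $\{\|x_{I^c}^k\|\}$ decays super-linearly; in particular $\sum_k\|x_{I^c}^k\|^2<\infty$ and $\limsup_k\|x_{I^c}^{k+1}\|/\|x_{I^c}^k\|=0$.

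The heart of the argument is to verify (H1$^\circ$) and (H2$^\circ$) for $\{x_I^k\}$ in $\R^s$ with respect to $f$, with the uniform error sequence $\epsilon_k:=D\|x_{I^c}^k\|$. Observe that $y_I^{k+1}$ is the exact proximal step for $\varphi$ about $x_I^k-v_k(\nabla h(x_I^k)+2A_I^{\top}A_{I^c}x_{I^c}^k)$, so Proposition~\ref{lem-LB}(i) in $\R^s$ (with $\xi:=2A_I^{\top}A_{I^c}x_{I^c}^k$ and $\epsilon=0$) yields a descent estimate $f(y_I^{k+1})-f(x_I^k)\le -\alpha_0\|y_I^{k+1}-x_I^k\|^2+C\|x_{I^c}^k\|^2$ after a Young's inequality on the $\xi$-term. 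To pass from $y_I^{k+1}$ to $x_I^{k+1}$, by Theorem~\ref{thm-SOG} and $x^k\to x^*$ both points eventually lie in a neighborhood of $x_I^*\in\R^s_{\neq}$ on which $f$ is $C^2$ with positive definite Hessian; Taylor's theorem expresses $f(x_I^{k+1})-f(y_I^{k+1})$ as $\nabla f(y_I^{k+1})^{\top}(x_I^{k+1}-y_I^{k+1})$ plus a quadratic remainder, and the first-order optimality of $y_I^{k+1}$ yields $\|\nabla f(y_I^{k+1})\|\le C_1\|x_I^{k+1}-x_I^k\|+C_2\|x_{I^c}^k\|$. Combining these with $\|x_I^{k+1}-y_I^{k+1}\|\le t_k\|x_I^{k+1}-x_I^k\|$ verifies (H1$^\circ$) with $\epsilon_k^2=O(\|x_{I^c}^k\|^2)$, while the same gradient estimate together with the local Lipschitzness of $\nabla f$ gives (H2$^\circ$) with an error of the same order (noting that $\partial f(x_I^{k+1})=\{\nabla f(x_I^{k+1})\}$ since $x_I^{k+1}\in\R^s_{\neq}$ eventually).

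Since $x_I^*$ is a local minimum of $f$ by \eqref{eq-g-Fp} and $\{\epsilon_k\}$ satisfies both \eqref{eq-epi} and \eqref{eq-epsilon} by the super-linear decay above, Theorem~\ref{thm-LC}(ii) applied to $\{x_I^k\}$ yields $\|x_I^k-x_I^*\|\le C\eta^k$ for some $\eta\in(0,1)$, which combined with the super-linear decay of $\|x_{I^c}^k\|$ gives the required linear convergence of $\{x^k\}$ to $x^*$ (the degenerate case $x^*=0$, i.e.\ $I=\emptyset$, is handled directly as in Theorem~\ref{thm-LC}). The main technical obstacle is the verification of (H1$^\circ$): a naive Lipschitz bound on $f$ produces a spurious $t_k\|x_I^{k+1}-x_I^k\|$ term that, if routed into $\epsilon_k^2$, fails to be summable because $t_k\to 0$ is not assumed summable. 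Routing the estimate instead through the first-order optimality of $y_I^{k+1}$ and Taylor's theorem converts this into a quadratic $t_k\|x_I^{k+1}-x_I^k\|^2$ term that can be absorbed into the strict descent coefficient once $k$ is large.
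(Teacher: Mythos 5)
Your proposal is correct and follows essentially the same route as the paper's proof: decompose into the $I$ and $I^c$ blocks, use Proposition \ref{lem-LB}(ii) to force the exact proximal iterate to vanish on $I^c$ for large $k$ (whence $\|x^{k+1}_{I^c}\|\le \frac{t_k}{1-t_k}\|x^k_{I^c}\|$ decays superlinearly), verify (H1$^\circ$)--(H2$^\circ$) for $\{x_I^k\}$ with respect to $f$ with errors of order $\|x^k_{I^c}\|$ via Proposition \ref{lem-LB}(i) with $\xi=2A_I^\top A_{I^c}x^k_{I^c}$ and the first-order optimality condition of the exact prox, and then invoke Theorem \ref{thm-LC}(ii). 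Your device of routing the estimate through the prox optimality condition and Taylor's formula so that the $t_k\|x_I^{k+1}-x_I^k\|^2$ terms are absorbed into the strict descent coefficient is exactly how the paper handles the same obstacle.
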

\begin{proof}
Note that $\mathcal{P}_{v_k}\left(x^k-v_k\nabla H\left(x^k\right)\right)$ is closed for each $k\in \IN$. Then, by \eqref{eq-PGA-c1} and \eqref{eq-PGA-c2}, one can choose
\begin{equation}\label{eq-yk}
y^k\in \mathcal{P}_{v_k}\left(x^k-v_k\nabla H\left(x^k\right)\right)
\end{equation}
such that
\begin{equation}\label{eq-PGA-c1n}
\|x^{k+1}_{I}-y^k_I\|\le t_k\|x_I^{k+1}-x_I^k\| \quad \mbox{and}\quad \|x^{k+1}_{I^c}-y^k_{I^c}\|\le t_k\|x_{I^c}^{k+1}-x_{I^c}^k\|
\quad \mbox{for each $k\in \IN$.}
\end{equation}
Noting that $x^*_I\in \R_{\neq}^s$ (cf. \eqref{eq-sI}) and recalling that $f$, $h$ and $\varphi$ are defined by \eqref{eq-fun-g}, there exists $0<\delta<\left(\underline{v} \lambda p (1-p)\right)^{\frac{1}{2-p}}$ such that $\mathbf{B}(x_I^*,\delta)\subseteq \R_{\neq}^s$ and
\begin{equation}\label{eq-PGA-c2m}
\|\nabla \varphi(y)-\nabla \varphi(z)\|\le L_{\varphi}\|y-z\|\quad \mbox{for any } y,z\in \mathbf{B}(x_I^*,\delta).
\end{equation}
By the assumption that $\lim_{k\to \infty}x^k=x^*$ and $I={\rm supp}(x^*)$ (cf. \eqref{eq-sI}), we have by \eqref{eq-PGA-c1n} that $\lim_{k\to \infty}y^k_{I}=x^*_{I}$ and $\lim_{k\to \infty}y^k_{I^c}=x^*_{I^c}=0$. Then there exists $N\in \IN$ such that
\[
\|x^k_{I}-x^*_{I}\|\le \delta,\quad \|y^k_{I}-x^*_{I}\|\le \delta \quad \mbox{and} \quad \|y^k_{I^c}\|\le \delta\quad \mbox{for each } k \ge N.
\]
Consequently, one sees that
\begin{equation}\label{eq-thm-PGA-1}
x^k_{I},\,y^k_I\in \mathbf{B}(x_I^*,\delta) \subseteq \R^s_{\neq} \quad \mbox{and} \quad y^k_{I^c}=0\quad \mbox{for each } k \ge N
\end{equation}
(by Proposition \ref{lem-LB}(ii)), and by \eqref{eq-PGA-c2m} that
\begin{equation}\label{eq-PGA-p1}
\|\nabla \varphi(x_I^{k+1})-\nabla \varphi(y_I^{k})\|\le L_{\varphi}\|x_I^{k+1}-y_I^{k}\|\quad \mbox{for each } k \ge N.
\end{equation}
We first provide an estimate on $\{x^k_{I^c}\}_{k\ge N}$. By the assumption that $\lim_{k\to \infty} t_k=0$, we can assume, without loss of generality, that $t_k<\frac12$ for each $k\ge N$. By \eqref{eq-thm-PGA-1}, we obtain from the second inequality of \eqref{eq-PGA-c1n} that
\[
\|x^{k+1}_{I^c}\|\le t_k \|x^{k+1}_{I^c}-x^{k}_{I^c}\|\le t_k \|x^{k+1}_{I^c}\|+ t_k\|x^{k}_{I^c}\|,
\]
and so,
\begin{equation}\label{eq-thm-PGA-2}
\|x^{k+1}_{I^c}\|\le \frac{t_k}{1-t_k} \|x^{k}_{I^c}\|< 2t_k\|x^{k}_{I^c}\| \quad \mbox{for each } k \ge N.
\end{equation}
Below, we estimate $\{x^k_I\}_{k\ge N}$. To do this, we fix $k\ge N$ and let $\tau$ be a constant such that $0<\tau<\frac{1}{4\bar{v}}-\frac12\|A\|^2$
(recalling \eqref{eq-stepsize}). By \eqref{eq-PGA-c1n} and using the triangle inequality, one has that
\begin{equation}\label{eq-thm-PGA-3a}
\frac12\|x_I^{k+1}-x_I^k\|<(1-t_k)\|x_I^{k+1}-x_I^k\|\le \|y_I^k-x_I^k\|\le (1+t_k)\|x_I^{k+1}-x_I^k\| <\frac32\|x_I^{k+1}-x_I^k\|
\end{equation}
(by $t_k<\frac12$). By \eqref{eq-yk}, \eqref{eq-Flp} and \eqref{eq-fun-g}, we check that $y^k_I\in \mathcal{P}_{v_k}\left(x^k_I-v_k\left(\nabla h(x^k_I)+2A_IA_{I^c}x^k_{I^c}\right)\right)$,
and so, we obtain from Proposition \ref{lem-LB}(i) (with $f$, $h$, $A_I$, $y^k_I$, $x^k_I$, $v_k$, $2A_I^\top A_{I^c}x^k_{I^c}$, 0 in place of $F$, $H$, $A$, $z$, $x$, $v$, $\xi$, $\epsilon$) that
\begin{eqnarray}
f(y^k_I)-f(x^k_I)&\le& -\left(\frac{1}{2v_k}-\|A_I\|^2\right)\|y^k_I-x^k_I\|^2-\langle y^k_I-x^k_I, 2A_I^\top A_{I^c}x^k_{I^c}\rangle\nonumber\\
&\le&-\left(\frac{1}{2v_k}-\|A\|^2\right)\|y^k_I-x^k_I\|^2+\tau\|y^k_I-x^k_I\|^2+\frac1\tau\|A\|^4\|x^k_{I^c}\|^2 \label{eq-thm-PGA-3}\\
&\le& -\frac14\left(\frac{1}{2\bar{v}}-\|A\|^2-\tau\right)\|x_I^{k+1}-x_I^k\|^2+\frac1\tau\|A\|^4\|x^k_{I^c}\|^2 \nonumber
\end{eqnarray}
(by \eqref{eq-stepsize} and \eqref{eq-thm-PGA-3a}).
By the smoothness of $f$ on $\mathbf{B}(x_I^*,\delta)(\subseteq \R_{\neq}^s)$ and \eqref{eq-thm-PGA-1}, there exists $L>0$ such that
\begin{equation}\label{eq-thm-PGA-4}
f(x^{k+1}_I)-f(y^k_I)\le \|\nabla f(y^k_I)\|\|x^{k+1}_I-y^k_I\|+L\|x^{k+1}_I-y^k_I\|^2.
\end{equation}
(by Taylor°Øs formula). The first-order optimality condition of \eqref{eq-yk} says that
\begin{equation}\label{eq-thm-PGA-4a}
\nabla \varphi(y^k_I)+\frac1{v_k}\left(y^k_I-x^k_I+2v_kA_I^\top (Ax^k-b)\right)=0.
\end{equation}
Then we obtain by \eqref{eq-fun-g} that
\[
\nabla f(y^k_I)=2A_I^\top (A_Iy^k_I-b)+\nabla \varphi(y^k_I)=-\left(\frac1{v_k}-2A_I^\top A_I\right)(y^k_I-x^k_I)-2A_I^\top A_{I^c}x^k_{I^c};
\]
consequently,
\begin{eqnarray}
\|\nabla f(y^k_I)\|&\le& \left(\frac1{v_k}-2\|A\|^2\right)\|y^k_I-x^k_I\|+2\|A\|^2\|x^k_{I^c}\|\nonumber \\
&\le& \frac32\left(\frac1{\bar{v}}-2\|A\|^2\right)\|x^{k+1}_I-x^k_I\|+2\|A\|^2\|x^k_{I^c}\|\nonumber
\end{eqnarray}
(due to \eqref{eq-stepsize} and \eqref{eq-thm-PGA-3a}). Combing this with \eqref{eq-thm-PGA-4}, we conclude by the first inequality of \eqref{eq-PGA-c1n} that
\begin{eqnarray}
& & f(x^{k+1}_I)-f(y^k_I)\nonumber\\
& &\le \frac32 \left(\frac1{\bar{v}}-2\|A\|^2\right)t_k\|x^{k+1}_I-x^k_I\|^2+2\|A\|^2t_k\|x^k_{I^c}\|\|x^{k+1}_I-x^k_I\|+Lt_k^2\|x^{k+1}_I-x^k_I\|^2\label{eq-thm-PGA-4an}\\
& &\le  \left(\frac32\left(\frac1{\bar{v}}-2\|A\|^2\right)t_k+t_k^2(L+\tau)\right)\|x^{k+1}_I-x^k_I\|^2+ \frac1\tau \|A\|^4\|x^k_{I^c}\|^2.\nonumber
\end{eqnarray}
Recalling that $\lim_{k\to \infty}t_k=0$, we can assume, without loss of generality, that
\[
\frac32\left(\frac1{\bar{v}}-2\|A\|^2\right)t_k +t_k^2(L+\tau)\le \frac14 \tau \quad \mbox{for each } k\ge N.
\]
This, together with \eqref{eq-thm-PGA-3} and \eqref{eq-thm-PGA-4an}, yields that
\begin{equation}\label{eq-thm-PGA-5}
f(x^{k+1}_I)-f(x^k_I)\le -\frac14\left(\frac{1}{2\bar{v}}-\|A\|^2-2\tau\right)\|x_I^{k+1}-x_I^k\|^2+\frac2\tau\|A\|^4\|x^k_{I^c}\|^2.
\end{equation}
On the other hand, by the smoothness of $f$ on $\mathbf{B}(x_I^*,\delta)$, we obtain by \eqref{eq-thm-PGA-1} and \eqref{eq-fun-g} that
\begin{equation}\label{eq-thm-PGA-5a}
\|\nabla f(x_I^{k+1})\|\le \|\nabla h(x^k_I)+\nabla \varphi(y^k_I)\|+\|\nabla h(x^{k+1}_I)-\nabla h(x^k_I)\|+\|\nabla \varphi(x^{k+1}_I)-\nabla \varphi(y^k_I))\|.
\end{equation}
Note by \eqref{eq-thm-PGA-4a}, \eqref{eq-thm-PGA-3a} and \eqref{eq-stepsize} that
\[
\|\nabla h(x^k_I)+\nabla \varphi(y^k_I)\|=\|\frac1{v_k}(x^k_I-y^k_I)-2A^\top_IA_{I^c}x_{I^c}^k\|
\le\frac3{2\underline{v}}\|x^{k+1}_I-x^k_I\|+2\|A\|^2\|x_{I^c}^k\|,
\]
\[
\|\nabla h(x^{k+1}_I)-\nabla h(x^k_I)\|\le 2\|A\|^2\|x^{k+1}_I-x^k_I\|,
\]
and by \eqref{eq-PGA-p1} and \eqref{eq-PGA-c1n} that
\[
\|\nabla \varphi(x^{k+1}_I)-\nabla \varphi(y^k_I)\|\le L_\varphi \|x^{k+1}_I-y^k_I\|\le L_\varphi t_k\|x^{k+1}_I-x^k_I\|.
\]
Hence, \eqref{eq-thm-PGA-5a} implies that
\begin{equation*}\label{eq-thm-PGA-5new}
\|\nabla f(x_I^{k+1})\|\le \left(\frac3{2\underline{v}}+2\|A\|^2+L_\varphi t_k\right)\|x^{k+1}_I-x^k_I\|+2\|A\|^2\|x_{I^c}^k\|.
\end{equation*}
This and \eqref{eq-thm-PGA-5} show that $\{x^k_I\}_{k\ge N}$ satisfies (H1$^\circ$) and (H2$^\circ$) with $f$, $x^k_I$, $\frac14\left(\frac{1}{2\bar{v}}-\|A\|^2-2\tau\right)$, $\left(\frac3{2\underline{v}}+2\|A\|^2+L_\varphi t_k\right)$ and $\max\left\{\sqrt{\frac2\tau},2\right\}\|A\|^2\|x_{I^c}^k\|$
in place of $F$, $x^k$ $\alpha$, $\beta$ and $\epsilon_k$, respectively. Furthermore, it follows from \eqref{eq-thm-PGA-2} that
$\lim_{k\to \infty} \frac{\|x_{I^c}^{k+1}\|}{\|x_{I^c}^k\|}\le \lim_{k\to \infty} 2t_k=0$.
This verifies \eqref{eq-epsilon} assumed in Theorem \ref{thm-LC}(ii). Therefore, the assumptions of Theorem \ref{thm-LC}(ii) are satisfied, and so it follows that $\{x^k_I\}$ converges linearly to $x^*_I$. Recall from \eqref{eq-thm-PGA-2} that $\{x^k_{I^c}\}$ converges linearly to $x^*_{I^c}$ (=0). Therefore, $\{x^k\}$ converges linearly to $x^*$. The proof is complete.
\end{proof}

\begin{remark}\label{rem-IPGA}
{\rm
Frankel et al. \cite{Frankel-JOTA} considered an inexact PGA similar to Algorithm IPGA-II with the inexact control being given by
\begin{equation*}
\epsilon_k=t_k\,{\rm dist} \left(\mathcal{P}_{v_k}\left(x^k-v_k\nabla H\left(x^k\right)\right),\mathcal{P}_{v_k}\left(x^{k-1}-v_{k-1}\nabla H\left(x^{k-1}\right)\right)\right).
\end{equation*}
However, this inexact control would be not convenient to implement for applications because $\epsilon_k$ is expressed in terms of $\mathcal{P}_{v}(\cdot)$ that is usually expensive to calculate exactly. In Theorem \ref{thm-IPGA-I}, we established the linear convergence of Algorithm IPGA-II with the inexact control being given by \eqref{eq-PGA-c1} and \eqref{eq-PGA-c2}.
Our convergence analysis deviates significantly from that of \cite{Frankel-JOTA}, in which the KL inequality is used as a standard technique.
}
\end{remark}

\begin{theorem}\label{thm-IPGA-II}
Let $\{x^k\}$ be a sequence generated by Algorithm IPGA-I with $\{v_k\}$ satisfying \eqref{eq-stepsize}. Suppose that $\{x^k\}$ converges to a global minimum $x^*$ of problem \eqref{eq-lp}  and that \eqref{eq-PGA2-1} and \eqref{eq-PGA2-2} are satisfied for each $k\in \IN$ with $\lim_{k\to \infty} \tau_k=0$. Then $\{x^k\}$ converges linearly to $x^*$.
\end{theorem}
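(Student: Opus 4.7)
The plan is to reduce Theorem \ref{thm-IPGA-II} to (the proof of) Theorem \ref{thm-IPGA-I} by upgrading the function-value inexact controls \eqref{eq-PGA2-1}-\eqref{eq-PGA2-2} to distance-based controls of the form \eqref{eq-PGA-c1}-\eqref{eq-PGA-c2} with some rate $t_k\to 0$. The conversion splits along the two blocks: on the $I$-block we exploit local strong convexity of the proximal subproblem near $x^*_I$, while on the $I^c$-block we exploit the first-order growth of $\lambda\|\cdot\|_p^p$ at $0$ afforded by Lemma \ref{lem-FOG}.

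On the $I$-block, Theorem \ref{thm-SOG}(ii) combined with $v_k<\frac{1}{2}\|A\|^{-2}$ gives $\frac{1}{v_k}>2\|A_i\|^2>\lambda p(1-p)|x^*_i|^{p-2}$ for each $i\in I$, so the Hessian $\nabla^2\varphi(y)+\frac{1}{v_k}\mathrm{I}_s$ of the $I$-block proximal subproblem $G^I_k(y):=\lambda\|y\|_p^p+\frac{1}{2v_k}\|y-(x^k-v_k\nabla H(x^k))_I\|^2$ is positive definite on some ball $\mathbf{B}(x^*_I,\delta)$ with modulus $\mu>0$ uniform in $k$. In this ball $G^I_k$ has a unique local minimizer $y^k_I\to x^*_I$, and a set-valued continuity argument (combining $x^{k+1}_I\to x^*_I$, $\epsilon_k\to 0$, and strong convexity on $\mathbf{B}(x^*_I,\delta)$) forces $y^k_I$ to coincide with the global minimum of $G^I_k$ for $k$ large; otherwise the $\epsilon_k$-approximate global minimum $x^{k+1}_I$ could not stay in $\mathbf{B}(x^*_I,\delta)$. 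Strong convexity then yields $\frac{\mu}{2}\|x^{k+1}_I-y^k_I\|^2\le G^I_k(x^{k+1}_I)-G^I_k(y^k_I)\le\epsilon_k\le\tau_k\|x^{k+1}_I-x^k_I\|^2$, which is \eqref{eq-PGA-c1} with $t_k:=\sqrt{2\tau_k/\mu}\to 0$. On the $I^c$-block, Lemma \ref{lem-FOG} applied to the uniformly bounded family $h_k(y):=\frac{1}{2v_k}\|y-(x^k-v_k\nabla H(x^k))_{I^c}\|^2$ (after checking uniformity of its constants, which depend only on $\|\nabla h_k(0)\|$ and $\|\nabla^2 h_k\|$) provides uniform $\epsilon',\delta'>0$ with $G^{I^c}_k(y)\ge G^{I^c}_k(0)+\epsilon'\|y\|$ for $\|y\|\le\delta'$. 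Combining the global-minimum hypothesis on $x^*$ with the stepsize condition (via $F(x^*+y_ie_i)-F(x^*)=2y_iA_i^\top(Ax^*-b)+y_i^2\|A_i\|^2+\lambda|y_i|^p\ge 0$ plus the nonnegative correction $(\frac{1}{2v_k}-\|A_i\|^2)y_i^2$) shows $0$ to be a global minimum of the separated $1$D limiting subproblems; a perturbation argument together with Proposition \ref{lem-LB}(ii) transfers this to $G^{I^c}_k$ itself for $k$ large, so that $y^k_{I^c}=0$ is admissible in $\mathcal{P}_{v_k}((x^k-v_k\nabla H(x^k))_{I^c})$. Then $\epsilon'\|x^{k+1}_{I^c}\|\le\epsilon_k\le\tau_k\|x^{k+1}_{I^c}-x^k_{I^c}\|^2$, which (using $\|x^{k+1}_{I^c}-x^k_{I^c}\|\le 1$ eventually) gives \eqref{eq-PGA-c2} with $t'_k:=\tau_k/\epsilon'\to 0$.

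With \eqref{eq-PGA-c1} and \eqref{eq-PGA-c2} in hand and $\max\{t_k,t'_k\}\to 0$, the conclusion follows from an argument identical to that of Theorem \ref{thm-IPGA-I}. The main obstacle is justifying rigorously that the local minimizer $y^k_I$ of $G^I_k$ in the strong-convexity ball coincides with its global minimum for $k$ large, and the analogous claim that $y^k_{I^c}=0$ is the global minimum of $G^{I^c}_k$; both rely essentially on the \emph{global}-minimum hypothesis on $x^*$ (strengthening the local-minimum hypothesis of Theorem \ref{thm-IPGA-I}), together with the convergence $x^{k+1}\to x^*$ and $\epsilon_k\to 0$ to force the global minimizers of the separated subproblems into the neighborhood of $x^*$.
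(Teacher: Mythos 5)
Your overall strategy is the same as the paper's: convert the value-based controls \eqref{eq-PGA2-1}--\eqref{eq-PGA2-2} into distance-based controls of the form \eqref{eq-PGA-c1}--\eqref{eq-PGA-c2} with rate tending to zero, and then invoke Theorem \ref{thm-IPGA-I}, using uniform strong convexity of the $I$-block subproblem and the lower bound of Proposition \ref{lem-LB}(ii) on the $I^c$-block. However, the step you yourself flag as ``the main obstacle'' is genuinely broken as argued. For the $I$-block you need the point $y^k_I$ --- which you construct only as the unique minimizer of $G^I_k$ on $\mathbf{B}(x^*_I,\delta)$ --- to belong to $\bigl(\mathcal{P}_{v_k}(x^k-v_k\nabla H(x^k))\bigr)_I$, i.e.\ to be a \emph{global} minimizer of $G^I_k$; otherwise the estimate $\|x^{k+1}_I-y^k_I\|\le\sqrt{2\tau_k/\mu}\,\|x^{k+1}_I-x^k_I\|$ says nothing about \eqref{eq-PGA-c1}. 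Your justification (``otherwise the $\epsilon_k$-approximate global minimum $x^{k+1}_I$ could not stay in $\mathbf{B}(x^*_I,\delta)$'') is not valid: an $\epsilon_k$-minimizer can perfectly well sit in the ball while the exact global minimizer lies far outside it, as long as the in-ball local minimum value exceeds the global infimum by at most $\epsilon_k$, and nothing in the local data you use excludes this. On the $I^c$-block the analogous localization (that $0$ is a global minimizer of $G^{I^c}_k$ for large $k$) is asserted via an unspecified ``perturbation argument''; it can be made rigorous (equi-coercivity of the one-dimensional subproblems, your $F$-based inequality, and Proposition \ref{lem-LB}(ii)), but as written it is a sketch, and no such argument at all is supplied for the $I$-block.

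The paper closes exactly this gap with one short global argument that your proposal is missing: take any exact prox point $y^k\in\mathcal{P}_{v_k}(x^k-v_k\nabla H(x^k))$ and apply Proposition \ref{lem-LB}(i) with $\xi=0$, $\epsilon=0$ to get $\bigl(\tfrac1{2\bar v}-\|A\|^2\bigr)\|y^k-x^k\|^2\le F(x^k)-F(y^k)\le F(x^k)-\min_{x}F(x)\to 0$, which is where the global-minimum hypothesis enters; hence $y^k\to x^*$, Proposition \ref{lem-LB}(ii) forces $y^k_{I^c}=0$ and $y^k_I\in\mathbf{B}(x^*_I,\delta)$, and then the strong-convexity estimate (essentially your $I$-block computation, cf.\ \eqref{eq-PGA-7c}) together with an elementary estimate on $\|x^{k+1}_{I^c}\|$ yields \eqref{eq-PGA-c1}--\eqref{eq-PGA-c2} and the reduction to Theorem \ref{thm-IPGA-I}. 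Alternatively, your own computation $F(x^*+d)-F(x^*)\ge 0$ combined with $\tfrac1{2v_k}>\|A\|^2$ does show that $x^*_I$ (resp.\ $0$) is the strict global minimizer, with modulus $\tfrac1{2\bar v}-\|A\|^2$, of the limiting $I$-block (resp.\ $I^c$-block) subproblem, and a genuine epi-convergence or uniform-convergence-plus-coercivity argument would then localize the exact minimizers of $G^I_k$ and $G^{I^c}_k$; but your proposal neither carries this out for the $I$-block nor offers a valid substitute, so the key localization of the exact proximal points remains unproved.
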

\begin{proof}
For simplicity, we write $y^k\in \mathcal{P}_{v_k}(x^k-v_k\nabla H(x^k))$ for each $k\in \IN$.
By Proposition \ref{lem-LB}(i) (with $y^{k}$, $x^k$, $v_k$, 0, $0$ in place of $z$, $x$, $v$, $\xi$, $\epsilon$) and by \eqref{eq-stepsize}, one has that
\begin{equation*}
\left(\frac{1}{2\bar{v}}-\|A\|^2\right)\|y^k-x^k\|^2\le F(x^k) - F(y^k)\le F(x^k)-\min_{x\in \R^n} F(x).
\end{equation*}
Then, by the assumption that $\{x^k\}$ converges to a global minimum $x^*$ of $F$, we have that $\{y^k\}$ also converges to this $x^*$.
By Theorem \ref{thm-SOG}, it follows from \eqref{eq-2nd} that $2A_I^{\top}A_I+\nabla^2 \varphi(x^*_I)=\nabla^2 f(x^*_I)\succ 0$.
This, together with $x^*_I\in \R_{\neq}^s$ (cf. \eqref{eq-sI}) and the smoothness of $\varphi$ at $x_I^*$, implies that there exists $0<\delta<\left(\underline{v} \lambda p (1-p)\right)^{\frac{1}{2-p}}$ such that \begin{equation}\label{eq-PGA-7}
\mathbf{B}(x^*_I,\delta)\subseteq \R_{\neq}^s\cap \{y\in \R^s:\nabla^2 \varphi(y)\succ -2A_I^\top A_I\}.
\end{equation}
By the convergence of $\{x^k\}$ and $\{y^k\}$ to $x^*$, there exists $N\in \IN$ such that
\begin{equation}\label{eq-PGA-7a}
x_I^k,y_I^k \in \mathbf{B}(x^*_I,\delta),\quad x_{I^c}^k \in \mathbf{B}(0,\delta) \quad \mbox{and} \quad y_{I^c}^k=0\quad \mbox{for each } k \ge N
\end{equation}
(by Proposition \ref{lem-LB}(ii)). Fix $k\ge N$.
Then, by \eqref{eq-PGA2-2} and \eqref{eq-PG}, we have that
\[
\varphi(x^{k+1}_{I^c})+\frac1{2v_k}\|x^{k+1}_{I^c}-x^k_{I^c}+2v_k A_{I^c}^\top  (Ax^k-b)\|^2\le \epsilon_k+\frac{1}{2v_k}\|-x^k_{I^c}+2v_k A_{I^c}^\top  (Ax^k-b)\|^2.
\]
This implies that
\begin{equation}\label{eq-PGA2-3}
\varphi(x^{k+1}_{I^c})\le \epsilon_k+ \frac1{2v_k}\left(\|x^k_{I^c}\|^2-\|x^k_{I^c}-x^{k+1}_{I^c}\|^2\right)-\langle x^{k+1}_{I^c},2A_{I^c}(Ax^k-b)\rangle.
\end{equation}
Note that $\lim_{k\to \infty} x^k_{I^c} =0$ and $\lim_{k\to \infty} \tau_k =0$. By \eqref{eq-PGA2-3} and \eqref{eq-PGA2-2}, there exists $K>0$ such that
\[
\|x^{k+1}_{I^c}\|_p^p\le K(\|x^{k+1}_{I^c}\|+\|x^k_{I^c}\|).
\]
Then it follows from \eqref{eq-norm} (as $p<1$) that
\[
\left(1-K\|x^{k+1}_{I^c}\|^{1-p}\right)\|x^{k+1}_{I^c}\|^p\le \|x^{k+1}_{I^c}\|_p^p- K\|x^{k+1}_{I^c}\| \le K\|x^k_{I^c}\|.
\]
Since $\lim_{k\to \infty} x^k_{I^c} =0$, we assume, without loss of generality, that $\|x^{k+1}_{I^c}\|\le (2K)^{-\frac1{1-p}}$. Hence,
\begin{equation*}\label{eq-PGA2-3a}
\|x^{k+1}_{I^c}\|^p\le 2K\|x^k_{I^c}\|= \left(2K \|x^k_{I^c}\|^{1-p}\right)\|x^k_{I^c}\|^p.
\end{equation*}
Let $\alpha_k:=\left(2K \|x^k_{I^c}\|^{1-p}\right)^{\frac1p}$.
Then it follows that
\begin{equation}\label{eq-PGA-Ic}
\|x^{k+1}_{I^c}-x^k_{I^c}\|\ge \|x^k_{I^c}\|-\|x^{k+1}_{I^c}\|\ge \frac{1-\alpha_k}{\alpha_k}\|x^{k+1}_{I^c}\|.
\end{equation}
On the other hand, let $f_k:\R^s\to \R$ be an auxiliary function defined by
\begin{equation}\label{eq-fk}
f_k(y):=\varphi(y)+\frac1{2v_k}\|y-\left(x^k_I -2v_kA_I^\top (Ax^k-b)\right)\|^2\quad \mbox{for each } y\in \R^s.
\end{equation}
Obviously, $f_k$ is smooth on $\R_{\neq}^s$ and note by Taylor's formula of $f_k$ at $y_I^k$ that
\begin{equation}\label{eq-fkTay}
f_k(y)= f_k(y_I^k)+ \nabla f_k(y_I^k)(y-y_I^k)+ \frac12 \langle y-y_I^k, \nabla^2 f_k(y_I^k)(y-y_I^k)\rangle +o(\|y-y_I^k\|^2), \forall y\in \R^s.
\end{equation}
By \eqref{eq-fk}, it is clear that $y_I^k \in {\rm arg}\min_{y\in \R^s} f_k(y)$. Its first-order necessary optimality condition says that $\nabla f_k(y_I^k)=0$,
and its second-order derivative is $\nabla^2 f_k(y_I^k) =\nabla^2 \varphi(y_I^k)+\frac1{v_k}\mathbf{I}_s$,
where $\mathbf{I}_s$ denotes the identical matrix in $\R^{s\times s}$. Note by \eqref{eq-PGA-7} and \eqref{eq-PGA-7a} that $\nabla^2 \varphi(y_I^k)\succ -2A_I^\top A_I$. Then
\begin{equation*}\label{eq-IPGA-oc2}
\nabla^2 f_k(y_I^k) \succ \frac1{v_k}\mathbf{I}_s-2A_I^\top A_I \succ \frac1{\bar v}\mathbf{I}_s-2A_I^\top A_I \succ 0
\end{equation*}
(by \eqref{eq-stepsize}).
Hence, letting $\sigma$ be the smallest eigenvalue of $\frac1{\bar v}\mathbf{I}_s-2A_I^\top A_I$,  we obtain by \eqref{eq-fkTay} that
\begin{equation}\label{eq-PGA-7c}
f_k(y)\geq f_k(y_I^k)+\frac{\sigma}2\|y-y_I^k\|^2\quad \mbox{for any } y\in \mathbf{B}(y_I^k,2\delta)
\end{equation}
(otherwise we can select a smaller $\delta$). By \eqref{eq-PGA-7a}, one observes that
\[
\|x^{k+1}_I-y^k_I\|\le \|x^{k+1}_I-x^*_I\|+\|y^k_I-x^*_I\|\le 2\delta,
\]
and so, \eqref{eq-PGA-7c} and \eqref{eq-PGA2-1} imply that
\begin{equation*}\label{eq-PGA-8}
\|x^{k+1}_I-y_I^k\|^2\le \frac2{\sigma}\left(f_k(x^{k+1}_I)- f_k(y_I^k)\right)\le \frac2{\sigma}\tau_k\|x_I^{k+1}-x_I^k\|^2.
\end{equation*}
Note that $y^k\in \mathcal{P}_{v_k}(x^k)$ is arbitrary. This, together with \eqref{eq-PGA-Ic}, shows that $\{x^k\}$ can be seen as a special sequence generated by Algorithm IPGA-II that satisfies \eqref{eq-PGA-c1} and \eqref{eq-PGA-c2} with $\max\{\frac{\alpha_k}{1-\alpha_k},\frac2{\sigma}\tau_k\}$ in place of $t_k$. Since $\lim_{k\to \infty} \tau_k=0$ and $\lim_{k\to \infty}\alpha_k=0$ (by the definition of $\alpha_k$), one has that $\lim_{k\to \infty} \max\{\frac{\alpha_k}{1-\alpha_k},\frac2{\sigma}\tau_k\} =0$, and so, the conclusion directly follows from Theorem \ref{thm-IPGA-I}.
\end{proof}

It is a natural question how to design the inexact PGA that satisfies \eqref{eq-PGA2-1}-\eqref{eq-PGA2-2} or \eqref{eq-PGA-c1}-\eqref{eq-PGA-c2}.
Note that both functions $\|\cdot\|_p^p$ and $\|\cdot-x\|^2$ in the proximal operator are separable (see \eqref{eq-PG}).
We can propose two implementable inexact PGAs, Algorithms IPGA-Ip and IPGA-IIp, which are the parallel versions of Algorithms IPGA-I and IPGA-II, respectively.

{\scshape Algorithm IPGA-Ip}.\,
Given an initial point $x^0\in \R^n$, a sequence of stepsizes $\{v_k\}\subseteq \R_+$ and a sequence of nonnegative scalars $\{\epsilon_k\}\subseteq \R_+$. For each $k\in \IN$, having $x^k$, we determine $x^{k+1}$ by
\begin{equation*}
x^{k+1}_i\in \mathcal{P}_{v_k,\epsilon_k}\left(\left(x^k-v_k\nabla H(x^k)\right)_i\right)  \mbox{ with }  \epsilon_k= \tau_k\|x_i^{k+1}-x_i^k\|^2\quad \mbox{for each }i=1,\dots,n.
\end{equation*}

{\scshape Algorithm IPGA-IIp}.\,
Given an initial point $x^0\in \R^n$, a sequence of stepsizes $\{v_k\}\subseteq \R_+$ and a sequence of nonnegative scalars $\{t_k\}\subseteq \R_+$. For each $k\in \IN$, having $x^k$, we determine $x^{k+1}$ satisfying
\begin{equation*}
{\rm dist} \left(x^{k+1}_i,\left(\mathcal{P}_{v_k}\left(x^k-v_k\nabla H\left(x^k\right)\right)\right)_i\right)\le t_k\|x_i^{k+1}-x_i^k\|\quad \mbox{for each }i=1,\dots,n.
\end{equation*}

It is easy to verify that Algorithms IPGA-Ip and IPGA-IIp satisfy conditions \eqref{eq-PGA2-1}-\eqref{eq-PGA2-2} and \eqref{eq-PGA-c1}-\eqref{eq-PGA-c2} respectively, and so, their linear convergence properties follow directly from Theorems \ref{thm-IPGA-I} and \ref{thm-IPGA-II}.

\section{Extension to infinite dimensional cases}
This section extends the results in preceding sections to the infinite-dimensional Hilbert spaces.
In this section, we adopt the following notations.
Let $\mathcal{H}$ be a Hilbert space, and let $\ell^2$ denote the Hilbert space consisting of all square-summable sequences.
We consider the following $\ell_p$ regularized least squares problem in infinite-dimensional Hilbert spaces
\begin{equation}\label{eq-lp-inf}
  \min_{x\in l^{2}}\; F(x):=\|Ax-b\|^2+\sum_{i=1}^\infty\lambda_i|x_i|^p,
\end{equation}
where $A:\ell^2\to \mathcal{H}$ is a bounded linear operator, and $\lambda:=(\lambda_i)$ is a sequence of weights satisfying
\begin{equation}\label{eq-lambda}
\lambda_i\ge \underline{\lambda}>0\quad \mbox{for each } i\in \IN.
\end{equation}

We start from some useful properties of the (inexact) descent methods and then present the linear convergence of (inexact) descent methods and PGA for solving problem \eqref{eq-lp-inf}.
\begin{proposition}\label{lem-supp-inf}
Let $\{x^k\}\subseteq \ell^2$ be a sequence satisfying ${\rm (H1^{\circ})}$ and ${\rm (H2^{\circ})}$, and $\{\epsilon^k\}$ satisfy \eqref{eq-epi}.
Then there exist $N\in \IN$ and a finite index set $J\subseteq \IN$ such that
\begin{equation}\label{eq-lem-supp-inf}
{\rm supp}(x^k)=J\quad \mbox{for each } k\ge N.
\end{equation}
\end{proposition}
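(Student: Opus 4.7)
The plan is to adapt the finite-dimensional argument in Proposition \ref{lem-supp}(ii) to the infinite-dimensional setting by (a) producing a uniform positive lower bound $\gamma$ on the nonzero components of $x^k$ for all sufficiently large $k$, (b) using the objective boundedness to cap the cardinality of each $\mathrm{supp}(x^k)$, and (c) using $\|x^{k+1}-x^k\|\to 0$ to force the supports to stabilize.

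First I would collect some a priori estimates. From ${\rm (H1^{\circ})}$ together with the nonnegativity of $F$ in \eqref{eq-lp-inf}, a telescoping sum gives both $\sum_{k=0}^{\infty}\|x^{k+1}-x^{k}\|^2<+\infty$ (so $\|x^{k+1}-x^k\|\to 0$, which is exactly the infinite-dimensional analogue of Proposition \ref{lem-supp}(i)) and the uniform bound $F(x^k)\le F(x^0)+\sum_{j=0}^{\infty}\epsilon_j^2=:M$ for all $k$. In particular $\|Ax^k-b\|\le\sqrt{M}$. Since $\epsilon_k\to 0$ by \eqref{eq-epi}, hypothesis ${\rm (H2^{\circ})}$ yields $\|w^{k+1}\|\le\beta\|x^{k+1}-x^k\|+\epsilon_k\to 0$.

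Next I would derive the componentwise lower bound. For $i\in\mathrm{supp}(x^{k+1})$, the $i$-th component of any $w^{k+1}\in\partial F(x^{k+1})$ is
\[
w^{k+1}_i=2\bigl(A^{*}(Ax^{k+1}-b)\bigr)_i+\lambda_i\,p\,|x^{k+1}_i|^{p-1}\mathrm{sign}(x^{k+1}_i),
\]
so, using \eqref{eq-lambda} and $\|Ax^{k+1}-b\|\le\sqrt{M}$,
\[
\underline{\lambda}\,p\,|x^{k+1}_i|^{p-1}\le |w^{k+1}_i|+2\|A\|\sqrt{M}\le \|w^{k+1}\|+2\|A\|\sqrt{M}.
\]
Since $\|w^{k+1}\|\to 0$ and $p-1<0$, this yields the existence of $N_1\in\IN$ and a constant $\gamma>0$ (explicitly $\gamma:=\bigl(\underline{\lambda}p/(1+2\|A\|\sqrt{M})\bigr)^{1/(1-p)}$) such that $|x^{k+1}_i|\ge\gamma$ whenever $i\in\mathrm{supp}(x^{k+1})$ and $k\ge N_1$.

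The rest is then immediate. Using the bound on $F(x^k)$ and the lower bound $\lambda_i|x^k_i|^p\ge\underline{\lambda}\gamma^p$ for $i\in\mathrm{supp}(x^k)$, the cardinality of $\mathrm{supp}(x^k)$ is uniformly bounded by $M/(\underline{\lambda}\gamma^p)$, hence finite, for $k\ge N_1$. Finally, pick $N\ge N_1$ so that $\|x^{k+1}-x^k\|<\gamma$ for every $k\ge N$; if $i\in\mathrm{supp}(x^k)\triangle\mathrm{supp}(x^{k+1})$, then either $|x^k_i|\ge\gamma$ with $x^{k+1}_i=0$ or vice versa, and in either case $\|x^{k+1}-x^k\|\ge\gamma$, a contradiction. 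Therefore $\mathrm{supp}(x^{k+1})=\mathrm{supp}(x^k)$ for all $k\ge N$, which gives the desired common finite index set $J$.

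The delicate step is (b), the subdifferential computation: I need the componentwise form of $\partial F$ in the infinite-dimensional $\ell^2$ setting to recover the sharp inequality $\underline{\lambda}p|x^{k+1}_i|^{p-1}\le|w^{k+1}_i|+2\|A\|\sqrt{M}$. Once this is in place, the lower bound $\gamma$, the uniform cardinality bound, and the stabilization via $\|x^{k+1}-x^k\|\to 0$ are straightforward.
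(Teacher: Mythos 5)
Your proof is correct and follows essentially the same route as the paper's: bound $F(x^k)$ via ${\rm (H1^{\circ})}$ and \eqref{eq-epi}, use ${\rm (H2^{\circ})}$ and the componentwise form of $\partial F$ on the support to get a uniform lower bound on nonzero entries, and then let $\|x^{k+1}-x^k\|\to 0$ force the supports to coincide. The only cosmetic differences are that you bound $\|Ax^k-b\|$ directly rather than $\|x^k\|$, and you make the finiteness of $J$ explicit via a cardinality bound (in the paper it is implicit, since entries on the support are bounded below by $\tau$ and $x^k\in\ell^2$).
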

\begin{proof}
Fix $k\in \IN$.
By ${\rm (H1^{\circ})}$, one has that
\[
F(x^{k})\le F(x^{k-1})-\alpha \|x^{k}-x^{k-1}\|^2+\epsilon_{k-1}^2\le F(x^{k-1})+\epsilon_{k-1}^2\le F(x^0)+\sum_{i=0}^\infty \epsilon_i^2<+\infty
\]
(due to \eqref{eq-epi}). Then, it follows from \eqref{eq-norm} and \eqref{eq-lambda} that
\[
\|x^k\|^p \le \|x^k\|_p^p \le \frac1{\underline{\lambda}} \sum_{i=1}^\infty\lambda_i|x_i^k|^p\le \frac1{\underline{\lambda}}F(x^{k})
<+\infty.
\]
Then $\{x^k\}$ is bounded, denoting the upper bound of their norms by $M$. Let
\begin{equation}\label{eq-lem-supp-1}
\tau:=\min\left\{\frac1\beta,\left(\frac{\underline{\lambda} p}{2+2\|A\|^2M+2\|A\|\|b\|}\right)^{1-p}\right\}\,(>0).
\end{equation}
Note by Proposition \ref{lem-supp}(i) that $\lim_{k\to \infty} \|x^{k+1}-x^k\|=0$, which, together with \eqref{eq-epi}, shows that there exists $N\in \IN$ such that
\begin{equation}\label{eq-lem-supp-1b}
\|x^{k+1}-x^k\|\le \tau\quad \mbox{and} \quad \epsilon_k< 1\quad \mbox{for each } k\ge N.
\end{equation}
We claim that the following implication is true for for each $k\ge N$ and $i\in \IN$
\begin{equation}\label{eq-lem-supp-1c}
x^{k}_i\neq 0\quad  \Rightarrow \quad |x^{k}_i|> \tau;
\end{equation}
hence, this, together with \eqref{eq-lem-supp-1b}, implies \eqref{eq-lem-supp-inf}, as desired.

Finally, we complete the proof by showing \eqref{eq-lem-supp-1c}. Fix $k> N$ and $i\in \IN$, and suppose that $x^{k}_i\neq 0$. Then, it follows from \eqref{eq-lambda} and (H2$^\circ$) that
\[
\underline{\lambda} p |x^{k}_i|^{p-1}+2A_i^\top(Ax^k-b)\le \|w^{k}\|\le \beta \|x^{k}-x^{k-1}\|+\epsilon_k< 2
\]
(due to \eqref{eq-lem-supp-1b} and $\tau\le \frac1\beta$ by \eqref{eq-lem-supp-1}). Noting that $\|x^k\|\le M$, we obtain from the above relation that
\[
|x^{k}_i|> \left(\frac{\underline{\lambda} p}{2+2\|A\|^2M+2\|A\|\|b\|}\right)^{1-p}\ge \tau
\]
(by \eqref{eq-lem-supp-1}), which verifies \eqref{eq-lem-supp-1c}, as desired.
\end{proof}

\begin{remark}\label{rem-inf}
{\rm
(i) Problem \eqref{eq-lp-inf} for the $n$-dimensional Euclidean space has an equivalent formula to that of problem \eqref{eq-lp}. Indeed, let
$u_i:=\left(\frac{\lambda_i}{\lambda}\right)^{\frac1p}x_i$ and $K_i:=\left(\frac{\lambda}{\lambda_i}\right)^{\frac1p}A_i$
for $i=1,\dots,n.$
Then, problem \eqref{eq-lp-inf} is reformulated to $\min_{u\in \R^n}\; \|Ku-b\|^2+\lambda\|u\|_p^p$ that is \eqref{eq-lp} with $K$ and $u$ in place of $A$ and $x$.

(ii) It is easy to verify by the similar proofs that Theorem \ref{thm-SOG} and Corollary \ref{coro-iso} are also true for problem \eqref{eq-lp-inf} in the infinite-dimensional Hilbert spaces.
}
\end{remark}

\begin{theorem}\label{thm-DM-inf}
Let $\{x^k\}\subseteq \ell^2$ be a sequence satisfying ${\rm (H1)}$ and ${\rm (H2)}$. Then $\{x^k\}$ converges to a critical point $x^*$ of problem \eqref{eq-lp-inf}.
Suppose that $x^*$ is a local minimum of problem \eqref{eq-lp-inf}. Then $\{x^k\}$ converges linearly to $x^*$.
\end{theorem}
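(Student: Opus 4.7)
The plan is to reduce the infinite-dimensional problem to a finite-dimensional one by exploiting the eventual stabilization of the support, and then invoke the finite-dimensional Theorem~\ref{thm-LC-DM}.

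First observe that (H1) and (H2) are the special cases of (H1$^\circ$) and (H2$^\circ$) with $\epsilon_k\equiv 0$, so \eqref{eq-epi} is trivially satisfied. Hence Proposition~\ref{lem-supp-inf} applies and provides an integer $N\in \IN$ and a finite index set $J\subseteq \IN$ such that ${\rm supp}(x^k)=J$ for every $k\ge N$. Set $s:=|J|$, let $A_J:\R^s\to \mathcal{H}$ denote the restriction of $A$ to the columns indexed by $J$, and define the finite-dimensional objective
\[
f_J(y):=\|A_J y-b\|^2+\sum_{i\in J}\lambda_i|y_i|^p,\quad y\in \R^s.
\]
By Remark~\ref{rem-inf}(i), a coordinate rescaling puts $f_J$ into exactly the form of the $\ell_p$ regularized function treated in the finite-dimensional sections, so Theorem~\ref{thm-LC-DM} applies to any sequence in $\R^s$ satisfying (H1)--(H2) for $f_J$.

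Next I would verify that the restricted sequence $\{y^k\}_{k\ge N}$, with $y^k:=(x^k_i)_{i\in J}$, inherits conditions (H1) and (H2) for $f_J$. Since ${\rm supp}(x^k)=J$ for $k\ge N$, we have $F(x^k)=f_J(y^k)$ and $\|x^{k+1}-x^k\|_{\ell^2}=\|y^{k+1}-y^k\|$, so (H1) carries over verbatim. For (H2), pick the subgradient $w^{k+1}\in\partial F(x^{k+1})$ provided by the hypothesis; its restriction $(w^{k+1}_i)_{i\in J}$ lies in $\partial f_J(y^{k+1})$, because on the open orthant in $\R^s$ corresponding to the sign pattern of $y^{k+1}\in \R^s_{\ne}$ the function $f_J$ is smooth and its partial derivatives coincide with the corresponding components of any element of $\partial F(x^{k+1})$ (by the local identity $F(x)=f_J(x_J)$ for $x$ with support $J$). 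The norm bound then transfers because the $J$-components of $w^{k+1}$ have Euclidean norm at most $\|w^{k+1}\|_{\ell^2}\le\beta\|x^{k+1}-x^k\|_{\ell^2}=\beta\|y^{k+1}-y^k\|$.

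Now Theorem~\ref{thm-LC-DM} applied to $\{y^k\}_{k\ge N}$ yields a critical point $y^*\in \R^s$ of $f_J$ with $y^k\to y^*$. Extending $y^*$ by zero outside $J$ produces $x^*\in \ell^2$, a critical point of problem~\eqref{eq-lp-inf}, and $x^k\to x^*$ in $\ell^2$. If furthermore $x^*$ is a local minimum of problem~\eqref{eq-lp-inf}, then $y^*$ is automatically a local minimum of $f_J$ (restriction of a local minimum of $F$ to the finite-dimensional coordinate subspace indexed by $J$), so the second part of Theorem~\ref{thm-LC-DM} gives linear convergence of $\{y^k\}$ to $y^*$, and since $x^k-x^*=0$ off $J$ for $k\ge N$, we obtain the linear convergence $\|x^k-x^*\|_{\ell^2}=\|y^k-y^*\|\le C\eta^k$ in $\ell^2$.

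The main obstacle is the step (H2): the limiting subdifferential of $\lambda_i|t|^p$ at $t=0$ is the whole real line, so on $J^c$ the components of $w^{k+1}$ are not canonically determined; one must argue carefully that the $J$-components alone genuinely form an element of $\partial f_J(y^{k+1})$. This is where the local formula $F(x)=f_J(x_J)$ valid on the open set $\{x\in\ell^2:{\rm supp}(x)=J\}$ plays the central role, since it equates the partial derivatives of $F$ and $f_J$ on the coordinates in $J$ where $f_J$ is smooth at $y^{k+1}$.
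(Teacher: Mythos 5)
Your proof is correct and follows essentially the same route as the paper: support stabilization via Proposition \ref{lem-supp-inf}, reduction to the finite-dimensional function $f_J$, transfer of (H1)--(H2) to the restricted sequence $\{x^k_J\}_{k\ge N}$, and application of Theorem \ref{thm-LC-DM}, then extension by zero off $J$. Your additional care with the restriction of the subgradient in (H2) and with the weighted-to-unweighted rescaling via Remark \ref{rem-inf}(i) merely makes explicit what the paper's proof asserts with ``we can check.''
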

\begin{proof}
By the assumptions, it follows from Proposition \ref{lem-supp-inf} that there exist $N\in \IN$ and a finite index set $J$ such that
\eqref{eq-lem-supp-inf} is satisfied. Let $f_J:\R^{|J|}\to \R$ be a function denoted by
\begin{equation*}\label{eq-Fj}
f_J(y):=\|A_Jy-b\|^2+\sum_{i\in J} \lambda_i|y_i|^p\quad \mbox{for any }y\in \R^{|J|}.
\end{equation*}
By the assumptions and \eqref{eq-lem-supp-inf}, we can check that $\{x^k_J\}_{k\ge N}$ satisfies ${\rm (H1)}$ and ${\rm (H2)}$ with $x^k_J$ and $f_J$ in place of $x^k$ and $F$. Hence, the convergence of $\{x^k_J\}$ to a critical point $x^*_J$ of $f_J$ directly follows Theorem \ref{thm-LC-DM}. Let $x^*_{J_c}=0$. Then, by \eqref{eq-lem-supp-inf}, it follows that $\{x^k\}$ converges to this $x^*$, which is a critical point of problem \eqref{eq-lp-inf}. Furthermore, suppose that $x^*$ is a local minimum of problem \eqref{eq-lp-inf}. Then $x^*_J$ is also a local minimum of $f_J$, and so, the linear convergence of $\{x^k_J\}$ to $x^*_J$ also follows from Theorem \ref{thm-LC-DM}. Then, by \eqref{eq-lem-supp-inf}, we conclude that $\{x^k\}$ converges linearly to this $x^*$.
\end{proof}

\begin{theorem}\label{thm-DM2-inf}
Let $\{x^k\}\subseteq \ell^2$ be a sequence satisfying ${\rm (H1^{\circ})}$ and $\{\epsilon^k\}$ satisfy \eqref{eq-epi}. Suppose one of limiting points of $\{x^k\}$, denoted by $x^*$, is a local minimum of problem \eqref{eq-lp-inf}. Then the following assertions are true.
\begin{enumerate}[\rm (i)]
  \item $\{x^k\}$ converges to $x^*$.
  \item Suppose further that  $\{x^k\}$ satisfies ${\rm (H2^{\circ})}$ and $\{\epsilon^k\}$ satisfies \eqref{eq-epsilon}.
    Then $\{x^k\}$ converges linearly to $x^*$.
\end{enumerate}
\end{theorem}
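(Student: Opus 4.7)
The plan is to mirror the proof of the finite-dimensional Theorem~\ref{thm-LC} and, whenever possible, to reduce to the finite-dimensional setting via support stabilization.

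For assertion (i), I would first observe that Proposition~\ref{lem-supp}(i) carries over verbatim to $\ell^2$ (its proof only uses nonnegativity of $F$ together with the telescoping argument), giving $\sum_k\|x^{k+1}-x^k\|^2<+\infty$ and hence $\|x^{k+1}-x^k\|\to 0$. By Remark~\ref{rem-inf}(ii), Corollary~\ref{coro-iso} transfers to problem \eqref{eq-lp-inf}, so the local minimum $x^*$ is an isolated critical point. Note also that the first-order optimality condition for a critical point forces $|x^*_i|\ge(\lambda_i p/(2|A_i^\top(Ax^*-b)|))^{1/(p-1)}$ on $\mathrm{supp}(x^*)$, and combined with $x^*\in\ell^2$ this makes $\mathrm{supp}(x^*)$ finite. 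Now I would apply the standard connectedness-of-accumulation-points argument (as in \cite[Proposition~2.3]{Bredies2015}): once the iterates $x^k$ enter a small ball around $x^*$ in which the relevant analysis is effectively finite-dimensional (since $x^*$ has finite support and nonzero components bounded below), the condition $\|x^{k+1}-x^k\|\to 0$ combined with $x^*$ being the only critical point nearby forces $x^k\to x^*$.

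For assertion (ii), I would then bring in Proposition~\ref{lem-supp-inf}: since now both (H1$^\circ$) and (H2$^\circ$) hold and $\sum\epsilon_k^2<+\infty$, there exist $N\in\IN$ and a finite $J\subseteq\IN$ with $\mathrm{supp}(x^k)=J$ for all $k\ge N$. Because $x^k\to x^*$, this $J$ must equal $\mathrm{supp}(x^*)$. Define $f_J:\R^{|J|}\to\R$ by
\[
f_J(y):=\|A_Jy-b\|^2+\sum_{i\in J}\lambda_i|y_i|^p.
\]
For $k\ge N$ one has $x^k_{J^c}=0$ and $F(x^k)=f_J(x^k_J)$, so the shifted sequence $\{x^k_J\}_{k\ge N}$ satisfies (H1$^\circ$) with respect to $f_J$ and the same constants. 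For (H2$^\circ$), the restriction $w^k_J$ of $w^k\in\partial F(x^k)$ to $J$ lies in $\partial f_J(x^k_J)$ and has norm at most $\|w^k\|$, so the same inequality persists. Moreover $x^*_J$ is a local minimum of $f_J$ since a neighborhood of $x^*_J$ in $\R^{|J|}$, embedded as vectors supported on $J$, sits inside a neighborhood of $x^*$ in $\ell^2$ where $F(\cdot)\ge F(x^*)$.

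I would then apply the finite-dimensional Theorem~\ref{thm-LC}(ii) to $\{x^k_J\}_{k\ge N}$ with objective $f_J$ to obtain $C>0$ and $\eta\in(0,1)$ such that $f_J(x^k_J)-f_J(x^*_J)\le C\eta^k$ and $\|x^k_J-x^*_J\|\le C\eta^k$. Since $x^k_{J^c}=0=x^*_{J^c}$ for $k\ge N$, this immediately yields $\|x^k-x^*\|\le C\eta^k$ and $F(x^k)-F(x^*)\le C\eta^k$ on the tail, and absorbing the initial segment into the constant gives the result for all $k$. The main obstacle lies in assertion (i): without (H2$^\circ$) available, Proposition~\ref{lem-supp-inf} cannot yet be invoked, so the convergence $x^k\to x^*$ must be argued directly in $\ell^2$, which is not locally compact. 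This is handled by exploiting that $x^*$ has finite support and is an isolated critical point, reducing the local analysis around $x^*$ to finite dimensions exactly as in the proof of Theorem~\ref{thm-LC}(i).
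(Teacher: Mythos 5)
Your proposal follows the paper's own route: for part (i) you repeat the argument of Theorem \ref{thm-LC}(i) (vanishing steps from Proposition \ref{lem-supp}(i), isolation of $x^*$ via the infinite-dimensional version of Corollary \ref{coro-iso} noted in Remark \ref{rem-inf}(ii), then the standard argument of \cite[Proposition 2.3]{Bredies2015}), and for part (ii) you perform exactly the paper's reduction, namely support stabilization by Proposition \ref{lem-supp-inf}, passage to the finite-dimensional function $f_J$, and an application of the finite-dimensional Theorem \ref{thm-LC}(ii), just as in the proof of Theorem \ref{thm-DM-inf}. This is essentially the same approach as the paper's (which itself only sketches these steps and omits the details), so the proposal is correct at the same level of rigor.
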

\begin{proof}
The proofs of assertions (i) and (ii) of this theorem use the lines of analysis similar to that of assertion (i) of Theorem \ref{thm-LC} (recalling from Remark \ref{rem-inf}(ii) that Corollary \ref{coro-iso} is true for the infinite-dimensional cases) and that of Theorem \ref{thm-DM-inf}, respectively. The details are omitted.
\end{proof}

Bredies et al. \cite{Bredies2015} investigated the PGA for solving problem \eqref{eq-lp-inf} in infinite-dimensional Hilbert spaces and proved that the generated sequence converges to a critical point under the following additional assumptions: (a) $\{x\in \ell^2:A^\top Ax=\|A^\top A\|x\}$ is finite dimensional, (b) $\|A^\top A\|$ is not an accumulation point of the eigenvalues of $A^\top A$, (c) $A$ satisfies a finite basis injectivity property, and (d) $p$ is a rational. Dropping these technical assumptions, we prove the global convergence of the PGA only under the common made assumption on stepsizes, which significantly improves \cite[Theorem 5.1]{Bredies2015}, and further establish its linear convergence under a simple additional assumption in the following theorem.
Recall from \cite[Theorem 5.1]{AttouchBolte13} that the sequence $\{x^k\}$ generated by Algorithm PGA satisfies conditions (H1) and (H2) under the assumption \eqref{eq-stepsize}. Hence, as an application of Theorem \ref{thm-DM-inf}, the results in the following theorem directly follow.

\begin{theorem}\label{thm-PGA-inf}
Let $\{x^k\}\subseteq \ell^2$ be a sequence generated by Algorithm PGA with $\{v_k\}$ satisfying \eqref{eq-stepsize}.
Then $\{x^k\}$ converges to a critical point $x^*$ of problem \eqref{eq-lp-inf}. Furthermore, suppose that \ $x^*$ is a local minimum of problem \eqref{eq-lp-inf}. Then $\{x^k\}$ converges linearly to $x^*$.
\end{theorem}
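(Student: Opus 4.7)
The plan is to invoke Theorem \ref{thm-DM-inf} once we have verified that the exact PGA sequence fulfils its two hypotheses (H1) and (H2); the paragraph immediately preceding the theorem already announces this reduction explicitly, so no new convergence machinery is needed in infinite dimensions beyond what Theorem \ref{thm-DM-inf} has delivered.

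First I would establish (H1). Applying Proposition \ref{lem-LB}(i) with $\xi=0$, $\epsilon=0$, $x=x^k$, $z=x^{k+1}$ and $v=v_k$ to the infinite-dimensional analogue (the proof of that proposition uses only the elementary identity \eqref{eq-Axb}, which is valid for any bounded linear $A$), one obtains
\[
F(x^{k+1})-F(x^k)\le -\left(\tfrac{1}{2v_k}-\|A\|^2\right)\|x^{k+1}-x^k\|^2.
\]
The stepsize bound \eqref{eq-stepsize} gives $\tfrac{1}{2v_k}-\|A\|^2\ge \tfrac{1}{2\bar v}-\|A\|^2=:\alpha>0$, which is exactly (H1).

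Next I would establish (H2). From the exact proximal definition $x^{k+1}\in \mathcal{P}_{v_k}(x^k-v_k\nabla H(x^k))$, the first-order optimality condition yields
\[
-\tfrac{1}{v_k}\bigl(x^{k+1}-x^k+v_k\nabla H(x^k)\bigr)\in \partial \Phi(x^{k+1}),
\]
so $w^{k+1}:=\nabla H(x^{k+1})-\nabla H(x^k)-\tfrac{1}{v_k}(x^{k+1}-x^k)\in \partial F(x^{k+1})$. Since $\nabla H$ is Lipschitz with constant $2\|A\|^2$ (through $\nabla H(x)=2A^\top (Ax-b)$, which remains valid on $\ell^2$), we have
\[
\|w^{k+1}\|\le \left(2\|A\|^2+\tfrac{1}{v_k}\right)\|x^{k+1}-x^k\|\le \left(2\|A\|^2+\tfrac{1}{\underline v}\right)\|x^{k+1}-x^k\|,
\]
which is (H2) with $\beta:=2\|A\|^2+1/\underline v$.

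With (H1) and (H2) in hand, Theorem \ref{thm-DM-inf} gives convergence of $\{x^k\}$ to a critical point $x^*$ of \eqref{eq-lp-inf}, and whenever this $x^*$ happens to be a local minimum, the same theorem upgrades the convergence to linear, finishing the proof. The main obstacle is merely a bookkeeping one: making sure that Proposition \ref{lem-LB}(i), the computation of $\nabla H$, and the subdifferential identity $w^{k+1}\in \partial F(x^{k+1})$ all go through in $\ell^2$ without using finite-dimensionality. Since $A$ is a bounded linear operator on Hilbert space and $\Phi=\sum_i\lambda_i|x_i|^p$ is a lower semicontinuous separable function whose limiting subdifferential splits coordinatewise, each of these verifications is a direct translation of the finite-dimensional argument, and no new ingredient is required.
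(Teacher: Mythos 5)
Your proposal is correct and follows essentially the same route as the paper: the paper proves this theorem by noting that the PGA iterates satisfy (H1) and (H2) under \eqref{eq-stepsize} (citing \cite[Theorem 5.1]{AttouchBolte13}) and then applying Theorem \ref{thm-DM-inf}. Your only departure is that you verify (H1) and (H2) directly, via Proposition \ref{lem-LB}(i) with $\xi=0$, $\epsilon=0$ and the first-order optimality condition of the proximal subproblem, rather than by citation---a sound substitute that is, if anything, more careful in the $\ell^2$ setting.
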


Let $x^*$ be a local minimum of problem \eqref{eq-lp-inf}. It was reported in \cite[Theorem 2.1(i)]{ChenXJ10} that
\[
|x_i^*|\ge \left(\frac{\underline{\lambda}p(1-p)}{2\|A_i\|^2}\right)^{\frac1{2-p}} \quad \mbox{for each } i\in {\rm supp}(x^*).
\]
This indicates that ${\rm supp}(x^*)$ is a finite index set. Then, following the proof lines of Theorems \ref{thm-global}-\ref{thm-IPGA-II}, we can obtain the linear convergence of inexact PGAs for infinite-dimensional Hilbert spaces, which are provided as follows.

\begin{theorem}\label{thm-PGAII-inf}
Let $\{x^k\}\subseteq \ell^2$ be a sequence generated by Algorithm IPGA-I with $\{v_k\}$ satisfying \eqref{eq-stepsize}. Then the following assertions are true.
\begin{enumerate}[{\rm (i)}]
  \item Suppose that \eqref{eq-epi} is satisfied, and that one of limiting points of $\{x^k\}$, denoted by $x^*$, is a local minimum of problem \eqref{eq-lp-inf}. Then $\{x^k\}$ converges to $x^*$.
  \item Suppose that $\{x^k\}$ converges to a global minimum $x^*$ of problem \eqref{eq-lp-inf}  and that \eqref{eq-PGA2-1} and \eqref{eq-PGA2-2} are satisfied for each $k\in \IN$ with $\lim_{k\to \infty} \tau_k=0$. Then $\{x^k\}$ converges linearly to $x^*$.
\end{enumerate}
\end{theorem}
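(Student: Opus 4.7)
The plan is to reduce both assertions to their finite-dimensional counterparts (Theorems \ref{thm-global} and \ref{thm-IPGA-II}) by exploiting the structural fact, stated just before the theorem, that any local minimum $x^*$ of problem \eqref{eq-lp-inf} has finite support. Write $I := {\rm supp}(x^*)$ throughout.

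For assertion (i), I would first verify (H1$^\circ$). Applying Proposition \ref{lem-LB}(i) with $\xi = 0$, $v = v_k$, $\epsilon = \epsilon_k$ and $(x,z) = (x^k, x^{k+1})$ gives
\[
F(x^{k+1}) - F(x^k) \le -\left(\tfrac{1}{2\bar v} - \|A\|^2\right)\|x^{k+1} - x^k\|^2 + \epsilon_k,
\]
whose leading coefficient is strictly positive by \eqref{eq-stepsize}. This puts $\{x^k\}$ into the setting of (H1$^\circ$) with $\sqrt{\epsilon_k}$ in place of $\epsilon_k$, and the infinite-dimensional result Theorem \ref{thm-DM2-inf}(i) then delivers full-sequence convergence to the limiting-point local minimum $x^*$.

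For assertion (ii), the strategy parallels the proof of Theorem \ref{thm-IPGA-II}, with adaptations for the Hilbert-space setting. Since $x^*$ is a global minimum, $F(x^k)\downarrow\min F$; combining this with Proposition \ref{lem-LB}(i) applied to the exact proximal iterate $y^k \in \mathcal{P}_{v_k}(x^k - v_k\nabla H(x^k))$ yields $\|y^k - x^k\| \to 0$, hence $y^k \to x^*$. The second-order characterization of local minima (Theorem \ref{thm-SOG}, valid in infinite dimensions by Remark \ref{rem-inf}(ii)) together with the lower bound in Proposition \ref{lem-LB}(ii) then forces $y^k_{I^c} = 0$ and places $y^k_I$ eventually in a neighborhood of $x^*_I$ on which $\nabla^2\varphi \succ -2 A_I^\top A_I$. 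On the infinite tail $I^c$, condition \eqref{eq-PGA2-2} together with the weight bound $\lambda_i \ge \underline\lambda$ from \eqref{eq-lambda} produces a contraction $\|x^{k+1}_{I^c}\| \le (\alpha_k/(1-\alpha_k))\|x^k_{I^c}\|$ with $\alpha_k \to 0$, exactly as in \eqref{eq-PGA-Ic}. On the finite-dimensional component $I$, \eqref{eq-PGA2-1} together with a Taylor expansion of the auxiliary function $f_k$ from \eqref{eq-fk} around $y^k_I$ gives $\|x^{k+1}_I - y^k_I\|^2 \le (2/\sigma)\tau_k\|x^{k+1}_I - x^k_I\|^2$. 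Combining the two estimates recasts $\{x^k\}$ as a sequence satisfying the IPGA-II-type inexact conditions \eqref{eq-PGA-c1}--\eqref{eq-PGA-c2} with a tolerance tending to zero.

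The main obstacle will be closing the loop via an infinite-dimensional analog of Theorem \ref{thm-IPGA-I}, because Algorithm IPGA-II is not known to satisfy (H2$^\circ$) and so Proposition \ref{lem-supp-inf} is not directly applicable here. This is resolved by leveraging the finiteness of $I$: the geometric decay of $\|x^k_{I^c}\|$ already established above, combined with a finite-dimensional application of Theorem \ref{thm-LC}(ii) to the $I$-component (whose hypotheses (H1$^\circ$)--(H2$^\circ$) on $\R^{|I|}$ are supplied by essentially the same estimates as in the proof of Theorem \ref{thm-IPGA-II}, with $\|A\|^2\|x^k_{I^c}\|$ playing the role of the inexact term and \eqref{eq-epsilon} verified by the geometric decay), yields the desired linear convergence of the full sequence to $x^*$.
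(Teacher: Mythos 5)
Your proposal matches the paper's own argument: the paper proves this theorem exactly by noting (via the lower bound of Chen et al.) that ${\rm supp}(x^*)$ is finite and then ``following the proof lines of Theorems \ref{thm-global}--\ref{thm-IPGA-II}'', which is precisely your reduction — Proposition \ref{lem-LB}(i) plus the infinite-dimensional descent result (Theorem \ref{thm-DM2-inf}(i)) for assertion (i), and the $I$/$I^c$ decomposition with the tail contraction, the auxiliary function $f_k$, and an application of the IPGA-II-type analysis (Theorem \ref{thm-LC}(ii) on the finite block) for assertion (ii). No essential difference from the paper's intended proof.
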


\begin{theorem}\label{thm-PGAI-inf}
Let $\{x^k\}\subseteq \ell^2$ be a sequence generated by Algorithm IPGA-II with $\{v_k\}$ satisfying \eqref{eq-stepsize}. Suppose that $\{x^k\}$ converges to a local minimum $x^*$ of problem \eqref{eq-lp-inf} and that \eqref{eq-PGA-c1} and \eqref{eq-PGA-c2} are satisfied for each $k\in \IN$ with $\lim_{k\to \infty} t_k=0$. Then $\{x^k\}$ converges linearly to $x^*$.
\end{theorem}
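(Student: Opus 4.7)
The plan is to reduce the infinite-dimensional problem to a finite-dimensional one on the support of $x^*$, and then apply (essentially verbatim) the argument of Theorem \ref{thm-IPGA-I}. First, as the authors remark just before the statement, the lower bound cited from \cite{ChenXJ10} applied to the local minimum $x^*$ shows that $I:={\rm supp}(x^*)$ is a \emph{finite} index set, so $|I|<\infty$ and $A_I$ is a bounded linear map on a finite-dimensional space; in particular the functions $\varphi$, $h$, $f$ restricted to the $I$-coordinates are well-defined smooth functions near $x^*_I$, and the second-order characterization from Theorem \ref{thm-SOG} together with Corollary \ref{coro-iso} still applies (see Remark \ref{rem-inf}(ii)).

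Next, by \eqref{eq-PGA-c1}--\eqref{eq-PGA-c2} I select $y^k\in\mathcal{P}_{v_k}(x^k-v_k\nabla H(x^k))$ with $\|x^{k+1}_I-y^k_I\|\le t_k\|x^{k+1}_I-x^k_I\|$ and $\|x^{k+1}_{I^c}-y^k_{I^c}\|\le t_k\|x^{k+1}_{I^c}-x^k_{I^c}\|$. Since $x^k\to x^*$ and $t_k\to 0$, one deduces $y^k\to x^*$ componentwise; in particular $y^k_i\to x^*_i=0$ for each $i\in I^c$. The infinite-dimensional analogue of Proposition \ref{lem-LB}(ii), which holds componentwise with the uniform lower bound $(\underline{v}\,\underline{\lambda} p(1-p))^{1/(2-p)}$ thanks to \eqref{eq-lambda} and \eqref{eq-stepsize}, then forces $y^k_{I^c}=0$ for all sufficiently large $k$. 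Combining this with the $I^c$-part of \eqref{eq-PGA-c1n} gives, exactly as in \eqref{eq-thm-PGA-2},
\[
\|x^{k+1}_{I^c}\|\le \frac{t_k}{1-t_k}\|x^k_{I^c}\|<2t_k\|x^k_{I^c}\|\quad\text{for large }k,
\]
so $\{x^k_{I^c}\}$ converges $Q$-linearly (in fact superlinearly) to $0$.

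For the $I$-components, I would now repeat the argument in the proof of Theorem \ref{thm-IPGA-I}, using that on $\mathbf{B}(x^*_I,\delta)\subseteq\R_{\neq}^{|I|}$ we have $\nabla^2\varphi\prec 0$, $2A_I^\top A_I+\nabla^2\varphi(x^*_I)\succ 0$, and $\varphi$ is Lipschitz-smooth. The decomposition $Ax^k=A_I x^k_I+A_{I^c}x^k_{I^c}$ allows me to view $y^k_I$ as an exact proximal step for the finite-dimensional function $h_I(\cdot)=\|A_I\cdot-b\|^2$ with the additional perturbation $2A_I^\top A_{I^c}x^k_{I^c}$; absorbing this perturbation with a Young inequality (as in \eqref{eq-thm-PGA-3}--\eqref{eq-thm-PGA-4an}) yields, for large $k$,
\[
f(x^{k+1}_I)-f(x^k_I)\le -\alpha\|x^{k+1}_I-x^k_I\|^2+\tfrac{2}{\tau}\|A\|^4\|x^k_{I^c}\|^2,
\]
together with an $(\mathrm{H2}^\circ)$-type bound $\|\nabla f(x^{k+1}_I)\|\le\beta\|x^{k+1}_I-x^k_I\|+c\|x^k_{I^c}\|$ obtained from the optimality condition for $y^k_I$ and the Lipschitz continuity of $\nabla\varphi$ near $x^*_I$. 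Setting $\epsilon_k:=\max\{\sqrt{2/\tau},2\}\|A\|^2\|x^k_{I^c}\|$, the linear decay of $\|x^k_{I^c}\|$ established above verifies \eqref{eq-epi} and \eqref{eq-epsilon}, and Theorem \ref{thm-DM2-inf}(ii) (the infinite-dimensional version of Theorem \ref{thm-LC}) applied to the finite-dimensional sequence $\{x^k_I\}$ delivers linear convergence $x^k_I\to x^*_I$. Combined with the linear decay of $\|x^k_{I^c}\|$, this gives the claimed linear convergence $x^k\to x^*$.

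The main technical obstacle is the verification that $y^k_{I^c}=0$ eventually, because in infinite dimensions one cannot directly invoke finiteness of coordinates. The argument hinges on the \emph{uniform} positive lower bound on nonzero components of the exact proximal operator from Proposition \ref{lem-LB}(ii), which in turn relies on the uniform lower bound $\lambda_i\ge\underline\lambda$ in \eqref{eq-lambda} and the uniform stepsize bound in \eqref{eq-stepsize}; once this is in hand, the rest of the argument is a coordinate-wise transcription of the finite-dimensional proof of Theorem \ref{thm-IPGA-I}, as the authors suggest.
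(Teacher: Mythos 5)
Your proposal is correct and follows essentially the same route as the paper, which for this theorem only sketches the argument: finiteness of ${\rm supp}(x^*)$ via the lower bound of \cite[Theorem 2.1(i)]{ChenXJ10}, elimination of the $I^c$-components for large $k$ through the uniform threshold $(\underline{v}\,\underline{\lambda}p(1-p))^{\frac{1}{2-p}}$ coming from \eqref{eq-lambda} and \eqref{eq-stepsize}, and then a transcription of the proof of Theorem \ref{thm-IPGA-I} for the finite-dimensional block $\{x^k_I\}$ fed into the inexact descent framework. Your write-up in fact supplies more detail than the paper does; the only cosmetic slips are the citation of Theorem \ref{thm-DM2-inf}(ii) where Theorem \ref{thm-LC}(ii) is the natural reference for the finite-dimensional sequence, and the word ``componentwise'' where the norm convergence $\|y^k_{I^c}\|\to 0$ (which you effectively have from \eqref{eq-PGA-c2} and $x^k\to x^*$) is what makes the uniform threshold argument close.
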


\begin{remark}
{\rm
Algorithms IPGA-Ip and IPGA-IIp, the parallel versions of Algorithms IPGA-I and IPGA-II, are implementable for solving problem \eqref{eq-lp-inf} in the infinite-dimensional Hilbert spaces, and the generated sequences share the same linear convergence properties as shown in Theorems \ref{thm-PGAII-inf} and \ref{thm-PGAI-inf}, respectively.
}
\end{remark}


\end{document}